\newtheorem{theorem}{Theorem}[section]
\newtheorem{lemma}[theorem]{ Lemma}
\DeclareMathOperator{\GKdim}{GKdim}
\title{Nil algebras with restricted growth}
\author{T H Lenagan, Agata Smoktunowicz
\thanks{\,The research of the second author was
supported by Grant No. EPSRC
 EP/D071674/1.
 }
~~\\and Alexander Young\thanks{\,The research of the third author was
partially supported by the United States National
Science Foundation.}}
\date{ }
\begin{document}

\maketitle

\begin{abstract}
It is shown that over an arbitrary countable field, there exists a
finitely generated algebra that is nil, infinite dimensional, and
has Gelfand-Kirillov dimension at most three.
\end{abstract}

\noindent
{\em 2010 Mathematics subject classification:} 16N40, 16P90

\noindent
{\em Key words:}
Nil algebras, growth of algebras, Gelfand-Kirillov dimension


\section*{Introduction}

In 1902 William Burnside asked the following question which later became known
as the Burnside Problem: does a finitely generated group whose elements all
have finite order need to be finite? An analogous problem for algebras is the
Kurosh Problem: if $A$ is a finitely generated algebra over a field $K$, and
every element of $A$ is algebraic over $K$, does it follow that $A$ is finite
dimensional over $K$? A special case of the Kurosh Problem, sometimes known as
Levitski's Problem, concerns nil algebras: if $A$ is a finitely generated
algebra over a field $K$ and every element of $A$ is nilpotent, is $A$ finite
dimensional over $K$?

 The seminal work of Golod and Shafarevich, \cite{golod, gs}, in 1964, showed
that the answer to these famous problems was negative. Their method entailed
the construction of a finitely generated nil algebra $A$ which was infinite
dimensional and then from this algebra the counterexample to the Burnside
Problem arises by considering a group whose elements are of the form $1+n$,
for a particular nil algebra $A$ and some $n\in A$.

The groups and the algebras constructed by the Golod-Shafarevich method have
exponential growth. Much later, Gromov \cite{gromov} proved that under the
assumption that the group has polynomial growth, the answer to the Burnside
Problem is positive. In fact, he proved that a finitely generated group with
polynomial growth has a nilpotent normal subgroup of finite index. As a
consequence, if a finitely-generated group has polynomial growth and each
element has finite order then the group is finite.

Golod and Shafarevich's work together with Gromov's result naturally raises
the question as to whether a finitely generated nil algebra with polynomial
growth is of necessity finite dimensional, \cite{small, ufn}. Suprisingly,
this is not the case: in \cite{ls} Lenagan and Smoktunowicz constructed, over
any countable field, an infinite dimensional finitely generated nil algebra
with Gelfand-Kirillov dimension at most $20$. This result raises the following
question: what is the minimal rate of growth for a finitely generated infinite
dimensional nil algebra? In this paper, we make progress on this latter
question: by refining the methods of \cite{ls}, we construct, over any
countable field, an infinite dimensional finitely generated nil algebra with
Gelfand-Kirillov dimension at most $3$. (In fact, our algebra requires only
two generators.)




\section{Notation}

In what follows, $K$ will be a countable field and $A$ will be the free
$K$-algebra in two non-commuting indeterminates $x$ and $y$. The
set of monomials in $x$, $y$ is denoted by $M$, and $M(n)$
denotes the set of monomials of degree $n$, for each $n\geq 0$.
Thus, $M(0)=\{1\}$ and for $n\geq1$ the elements in $M(n)$ are of
the form $x_1...x_n$, where $x_i\in \{x, y\}$. The
$K$-subspace of $A$ spanned by $M(n)$ will be denoted by $H(n)$
and elements of $H(n)$ will be called {\em homogenous polynomials
of degree $n$}.  The {\em degree}, $\deg f$, of any $f \in A$,
is the least $d \geq 0$ such that $f \in H(0) + ... + H(d)$. Any
$f\in A$ can be uniquely presented in the form $f=f_0+f_1+...
+f_d$, where each $f_i\in H(i)$. The elements $f_{i}$ are the {\em
homogeneous components} of $f$.  A right ideal $I$ of $A$ is
{\em homogeneous} if for every $f\in I$ all homogeneous components of
$f$ are in $I$.  If $V$ is a linear space over $K$, then $\dim_K V$
denotes the dimension of $V$ over $K$.  The Gelfand-Kirillov dimension
of an algebra $R$ is denoted by $\GKdim(R)$. For elementary properties
of Gelfand-Kirillov dimension we refer to \cite{kl}.

For any real number $k$, define $\lfloor k \rfloor$ to be the largest
integer not exceeding $k$.

Throughout the paper,  $\bar{A}$ will denote
the subalgebra of $A$ consisting of polynomials
with constant term equal to zero.

Assume that all logarithms in this paper are of base 2.

The aim of this paper is to
present an algebra with the desired properties
in the form $\bar{A}/E$ for a suitable ideal $E$.
First, we will construct a sequence of linear spaces $U(2^n)$, and
then set $E$ to be the largest subset that for all $n\geq 0$,
$AEA\cap H(2^n)\subseteq U(2^n)$. As the sets $U(2^n)$ will be very large
in dimension ($\dim_K U(2^n) + 2=\dim_K H(2^n)$ for most $n$) and behave
like an ideal (that is, $H(2^n)U(2^n)+U(2^n)H(2^n)\subseteq U(2^{n+1})$),
the ideal $E$ will be very large and hence $\GKdim \bar{A}/E$ will be small.
To guarantee that the algebra $\bar{A}/E$ is nil we allow the
sets $U(2^n)$ to
have a bigger co-dimension at some sparse places.




\section{Enumerating elements}

We start with the following Lemma.

\begin{lemma}\label{enumerate}
    Let $K$ be a countable field, and let $\bar{A}$ be as above.
    Then there exists a subset $Z\subseteq \mathbb{N}$, with all
    $i\in Z$ being greater than or equal to $5$, and an enumeration
    $\{f_i\}_{i\in Z}$ of $\bar {A}$ such that
    $\lfloor \log i \rfloor>6^{6\deg f_i}$.
    Moreover, the set $Z$ has the following property: if $i>j$ and
    $i,j\in Z$ then $i > 2^{2^{2^{2j}}}$.
\end{lemma}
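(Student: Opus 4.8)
The plan is to start from an arbitrary bijective enumeration of the countable set $\bar A$ and then re-index it along an extremely sparse sequence of natural numbers, chosen so that both requirements — the degree bound $\lfloor \log i\rfloor > 6^{6\deg f_i}$ and the gap condition $i > 2^{2^{2^{2j}}}$ — are automatically forced. Since $K$ is countable and there are infinitely many monomials in $x,y$, the algebra $\bar A$ is countably infinite, so I would fix a bijection $k \mapsto g_k$ from $\{k\in\mathbb{N} : k\geq 1\}$ onto $\bar A$.

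Next I would define recursively a strictly increasing sequence of positive integers $i_1 < i_2 < \cdots$. Put $i_1$ equal to any integer with $i_1 \geq 5$ and $\lfloor \log i_1\rfloor > 6^{6\deg g_1}$; for instance $i_1 = 2^{\,6^{6\deg g_1}+1} + 5$ works, since then $\lfloor\log i_1\rfloor \geq 6^{6\deg g_1}+1$. Given $i_{k-1}$ for $k\geq 2$, let $i_k$ be any integer strictly larger than both $2^{2^{2^{2 i_{k-1}}}}$ and $2^{\,6^{6\deg g_k}+1}$; such integers exist in abundance, and any such choice automatically satisfies $i_k > i_{k-1}$ together with $\lfloor\log i_k\rfloor > 6^{6\deg g_k}$. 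Finally set $Z = \{\, i_k : k\geq 1 \,\}$ and $f_{i_k} := g_k$.

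It then remains to verify the three assertions. First, $Z\subseteq\mathbb{N}$ and every element of $Z$ is $\geq i_1 \geq 5$. Second, $k\mapsto i_k$ is a strictly increasing bijection from $\{k\geq1\}$ onto $Z$, so $\{f_i\}_{i\in Z}$ is the original enumeration $g_1,g_2,\dots$ merely re-indexed, hence an enumeration of $\bar A$, and the bound $\lfloor\log i\rfloor > 6^{6\deg f_i}$ holds for every $i\in Z$ by construction (the degenerate case $g_k=0$, where $\deg g_k=0$, is covered since then we only need $i_k\geq 4$). Third, for the gap property, suppose $i>j$ with $i,j\in Z$; writing $i=i_k$ and $j=i_l$ we have $k>l$, so $k-1\geq l$ and therefore $i_{k-1}\geq i_l = j$. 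Since $t\mapsto 2^{2^{2^{2t}}}$ is increasing, $i = i_k > 2^{2^{2^{2 i_{k-1}}}} \geq 2^{2^{2^{2j}}}$, which is exactly the required inequality.

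The argument is essentially just bookkeeping; the only points that need any care are, first, checking that the two lower bounds imposed on $i_k$ at each stage are mutually compatible — which is immediate, since one simply takes $i_k$ above their maximum — and, second, verifying the gap condition for all pairs in $Z$ rather than only for consecutive terms, which is what the monotonicity of the tower function above takes care of. I do not expect any genuine obstacle.
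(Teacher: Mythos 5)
Your argument is correct and is essentially the same as the paper's: both fix an arbitrary enumeration $g_1,g_2,\dots$ of the countable set $\bar A$ and build a sparse increasing index sequence recursively so that each new index dominates the required threshold for the degree bound and a tower over the previous index, then re-index $g_k$ at those positions. The only cosmetic difference is that the paper picks the minimum admissible index at each stage while you allow any admissible one.
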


\begin{proof}
    As $\bar{A}$ is a finitely-generated algebra over a countable
    field, it is itself countable. Let $\bar{A} = \{g_1, g_2, ...\}$ be
    an arbitrary enumeration. We now inductively define an increasing function
    $\theta: \mathbb{N} \rightarrow \mathbb{N}$ as follows: first set
    $\theta(1):= \min \{i \in \mathbb{N}| i>4, \lfloor \log i \rfloor>
    6^{6\deg g_1}\}$.

    As an inductive hypothesis, suppose that $\theta$ is defined over $\{1,
    ..., n\}$ such that $\lfloor \log(\theta(i))\rfloor > 6^{6\deg g_i}$ for
    each $i\leq n$. Then set $\theta(n+1) = \min\{s \in \mathbb{N}| \lfloor
    \log s \rfloor > 6^{6\deg g_{n+1}}, s>2^{2^{2^{2\theta(n)}}}\}$. If we now rename
    the elements of $\bar{A}$ by setting $f_{\theta(s)} = g_s$ then we have a
    listing of the elements of $\bar{A}$ with the required properties.
\end{proof}

\begin{theorem}\label{existFi}
    Let $Z$ and $\{f_{i}\}_{i\in Z}$ be as in Lemma~\ref{enumerate}. Let $i\in
    Z$, and let $I$ be the two-sided ideal generated by $f_i^{10w_i}$ where
    $w_i=4\cdot2^{2^i-\lfloor \log i\rfloor}$. There is a linear $K$-space
    $F_i \subseteq H(2^{2^{i}-\lfloor \log i \rfloor})$ such that $I\subseteq
    \sum_{k=0}^\infty H(k(2^{2^i-\lfloor \log i \rfloor})) F_i A$ and
    $\dim_{K}(F_i)<2^{2^i}-2$.
\end{theorem}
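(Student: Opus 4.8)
Throughout write $d=\deg f_i$, $s=2^{2^i-\lfloor\log i\rfloor}$, and note that $2^{2^i}=2^{\lfloor\log i\rfloor}\cdot s$ and that, by Lemma~\ref{enumerate}, $\lfloor\log i\rfloor>6^{6d}$, so $s$ is enormous compared with $2^{d}$. If $f_i=0$ take $F_i=0$; so assume $f_i\neq 0$, whence $d\geq 1$. Write $(g)_j$ for the degree-$j$ homogeneous component of $g\in A$, and set $R=\sum_{k\geq 0}H(ks)F_iA$. Since $H(ks)H(k's)F_iA=H((k+k')s)F_iA$, the set $R$ is a right ideal of $A$ closed under left multiplication by each $H(ks)$. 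As $I=Af_i^{10w_i}A$ and $R$ is a right ideal, to prove $I\subseteq R$ it is enough to prove $mf_i^{10w_i}\in R$ for every monomial $m$; and writing $\deg m=qs+\sigma$ with $0\leq\sigma<s$ and $m=m'm''$ with $m'$ a monomial of degree $qs$, one has $mf_i^{10w_i}=m'(m''f_i^{10w_i})$, so it suffices to treat monomials $m$ with $\deg m<s$.

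The plan is to \emph{define} $F_i$ as the $K$-linear span of the degree-$s$ components $(\tau f_i^{\,p}\rho)_s$, where $p$ runs over $\{0,1,2,\dots\}$ and $\tau,\rho$ run over the finite sets of, respectively, suffixes and prefixes (the empty word included) of the monomials occurring in $f_i$. Then $F_i\subseteq H(s)$. The point of this choice is the dimension estimate: for fixed $\tau,\rho$ the vector $(\tau f_i^{p}\rho)_s=\tau\,(f_i^{p})_{s-\deg\tau-\deg\rho}\,\rho$ is a \emph{single} element of $H(s)$, nonzero only when $\deg\tau+\deg\rho+p\le s\le\deg\tau+\deg\rho+pd$, hence for at most $s$ values of $p$. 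Since $f_i$ has at most $2^{d+1}$ monomials, there are at most $2^{d+1}$ admissible $\tau$ and at most $2^{d+1}$ admissible $\rho$, so $\dim_K F_i\le 2^{2d+2}s$. As $2d+2<6^{6d}<\lfloor\log i\rfloor$, we get $2^{2d+2}\le 2^{\lfloor\log i\rfloor}-1$, and therefore $\dim_K F_i\le(2^{\lfloor\log i\rfloor}-1)s=2^{2^i}-s<2^{2^i}-2$, as required.

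It remains to show $mf_i^{10w_i}\in R$ for a monomial $m$ with $\deg m<s$. Write $f_i=\sum_c\lambda_c u_c$ with $u_c$ monomials and $0\neq\lambda_c\in K$, and expand $mf_i^{10w_i}=\sum_{\vec c}\lambda_{c_1}\cdots\lambda_{c_{10w_i}}\,m\,u_{c_1}\cdots u_{c_{10w_i}}$. Each monomial $m u_{c_1}\cdots u_{c_{10w_i}}$ has degree at least $10w_i=40s>2s$, so cutting it at the degrees $s$ and $2s$ gives a factorization $B_1B_2C$ in which $B_1=m u_{c_1}\cdots u_{c_{\ell_1-1}}\rho_1$ has degree $s$ (so $B_1\in H(s)$) with $\rho_1$ a prefix of $u_{c_{\ell_1}}$, $B_2=\tau_1 u_{c_{\ell_1+1}}\cdots u_{c_{\ell_2-1}}\rho_2$ has degree $s$ with $\tau_1$ the complementary suffix of $u_{c_{\ell_1}}$ and $\rho_2$ a prefix of $u_{c_{\ell_2}}$, and $C\in A$; here $1\leq\ell_1<\ell_2\le 10w_i$ because $\deg m<s$ while the total degree exceeds $2s$, and a cut falling exactly between two $u_{c_j}$'s just makes the relevant $\tau$ or $\rho$ empty. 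Now group the tuples $\vec c$ by the datum consisting of $\ell_1$, $(c_1,\dots,c_{\ell_1})$, $\ell_2$, $(c_{\ell_2},\dots,c_{10w_i})$ and $\deg\rho_2$ — that is, by everything except the monomials $u_{c_{\ell_1+1}},\dots,u_{c_{\ell_2-1}}$ lying strictly inside $B_2$. Within one group the words $\tau_1,\rho_1,\rho_2$ and the factors $B_1,C$ are determined, the required degree $L:=s-\deg\tau_1-\deg\rho_2$ of $u_{c_{\ell_1+1}}\cdots u_{c_{\ell_2-1}}$ is fixed, and $(c_{\ell_1+1},\dots,c_{\ell_2-1})$ ranges over exactly the tuples for which this product is a monomial of $f_i^{\ell_2-\ell_1-1}$ of degree $L$ (the minimality conditions defining $\ell_1,\ell_2$ impose nothing more, because the partial degrees in question stay below $s$). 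Summing over the group,
\begin{equation*}
\sum\lambda_{c_{\ell_1+1}}\cdots\lambda_{c_{\ell_2-1}}\,B_2=\tau_1\,(f_i^{\ell_2-\ell_1-1})_{L}\,\rho_2=(\tau_1 f_i^{\ell_2-\ell_1-1}\rho_2)_{s}\in F_i,
\end{equation*}
so each group contributes an element of $H(s)F_iA\subseteq R$; hence $mf_i^{10w_i}\in R$ and $I\subseteq R$.

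I expect the real work to lie in the bookkeeping of the third paragraph: arranging the grouping so that each monomial tuple $\vec c$ is counted exactly once, checking that within a group the inner sum over the interior monomials of $B_2$ reassembles a \emph{full} homogeneous component $(\tau_1 f_i^{p}\rho_2)_s$ rather than a partial one, and disposing of the boundary cases in which a cut meets a block boundary (which is why empty words are admitted as $\tau$ and $\rho$). The large exponent $10w_i$ plays almost no role in this theorem — any exponent exceeding $2s$ would suffice — its size being forced instead by the later requirement that $\bar A/E$ be nil.
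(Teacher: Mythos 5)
Your proof is correct, but it takes a genuinely different route from the paper's: the paper's entire proof is a citation of \cite[Theorem 2]{smok} (with the remark that $6^{6\deg f_i}<\lfloor\log i\rfloor$ guarantees the hypotheses of that theorem are met, and that the two- versus three-generator distinction is immaterial), whereas you give a self-contained combinatorial argument. Your construction of $F_i$ as the span of the degree-$s$ slices $(\tau f_i^p\rho)_s$ of padded powers of $f_i$, the reduction to $\deg m<s$ by stripping off left factors in $H(qs)$, the cut at positions $s$ and $2s$, and the regrouping of tuples by the frame $(\ell_1,c_1,\dots,c_{\ell_1},\ell_2,c_{\ell_2},\dots,c_{10w_i},\deg\rho_2)$ so that the interior reassembles into a full homogeneous component $(\tau_1 f_i^{\ell_2-\ell_1-1}\rho_2)_s$ --- all of this checks out, and the dimension estimate $\dim_K F_i\leq 2^{2d+2}s<2^{2^i}-2$ is sound (here $\lfloor\log i\rfloor\geq 2d+3$ is what makes $2^{2d+2}\leq 2^{\lfloor\log i\rfloor}-2$, and $s>2$ because $i\geq 5$). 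In substance your argument is almost certainly a reconstruction of the proof of the reference that the paper is outsourcing to, so what you gain is a proof readable without leaving the paper, at the cost of a couple of bookkeeping conventions (which end of a boundary cut carries the empty word, and the fact that $\deg\rho_2\geq 1$ under the minimality convention for $\ell_2$) that you flag but do not fully pin down; they do not affect correctness. You are also right that the exponent $10w_i$ is enormously larger than needed here --- any exponent forcing $\deg(mf_i^{e})>2s$ would do --- and that its size is dictated by the nil-ness argument elsewhere, not by this theorem.
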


\begin{proof}
 Note that $6^{6\deg (f_i)}<\lfloor \log i \rfloor$ by Lemma~ \ref{enumerate}.
 Apply \cite[Theorem 2]{smok} with $f=f_{i}$, $r=2^{2^i-\lfloor \log
 i\rfloor}$, $w=w_i=4\cdot2^{2^i-\lfloor \log i\rfloor}$, and put $F_i=span_K
 F$, where $F$ is the corresponding set $F$ of the conclusion of Lemma~
 \ref{enumerate}. Note that these choices of $f,r,w$ satisfy the hypotheses of
 \cite[Theorem 2]{smok}. Although the algebra $A$ in \cite[Theorem 2]{smok} is
 generated by three elements not by two, this does not influence the proof.
\end{proof}



\section{Definition of $U(2^n)$ and $V(2^n)$}\label{defuv}

In this section we will define a set $U$ with the properties mentioned in
the introduction. In order to construct $U$ we will first construct the
sets $U(2^n)=H(2^n)\cap U$, for $n=1, 2, \dots$. In the next
section we will construct the ideal $E$ by requiring that $r\in E$ if
$prq\in U$ for all $p, q\in A$.

For each $i \in Z$, set $S_i = [2^i-i-\lfloor \log i \rfloor, 2^i-\lfloor \log
i \rfloor-1]$, and set $S=\bigcup_{i\in Z} S_i$. Note that the $S_i$
are pairwise disjoint.

\begin{theorem} \label{8props}
    Let $Z$, $F_{i}$  be as in Theorem~\ref{existFi}.
    There are $K$-linear subspaces $U(2^{n})$ and $V(2^{n})$ of $H(2^{n})$
    such that for all $n>0$:
    \begin{enumerate}
        \item $\dim_{K}V({2^{n}})=2$ if $n\notin S$;

        \item $\dim_{K}V(2^{2^{i}-i-\lfloor log(i)\rfloor+j})=2^{2^{j}}$, for
        all $1<i\in Z$ and all $0\leq j\leq i-1$;

        \item $V(2^{n})$ is spanned by monomials;

        \item $F_{i}\subseteq U(2^{2^{i}-\lfloor log(i)\rfloor})$ for every
        $i\in Z$;

        \item $V(2^{n})\oplus U(2^{n})=H(2^{n})$;
        \item $H(2^{n})U(2^{n})+U(2^{n})H(2^{n})\subseteq U(2^{n+1})$;
        \item $V(2^{n+1})\subseteq V(2^{n})V(2^{n})$;
        \item  if $n\notin S$ then there are monomials $m_{1},
            m_{2}\in V(2^{n})$ such that $V(2^n)=Km_{1}+Km_{2}$ and
            $m_{2}H(2^{n})\subseteq U(2^{n+1})$.

    \end{enumerate}

\end{theorem}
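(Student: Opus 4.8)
The plan is to realise every $V(2^n)$ and $U(2^n)$ as a \emph{monomial} subspace, so that properties 3 and 5 are automatic and the remaining conditions become combinatorics of monomials. I would construct, by induction on $n$, a set $W(2^n)\subseteq M(2^n)$ and put $V(2^n)=\bigoplus_{w\in W(2^n)}Kw$ and $U(2^n)=\bigoplus_{w\in M(2^n)\setminus W(2^n)}Kw$. Since the concatenation map $M(2^n)\times M(2^n)\to M(2^{n+1})$ is injective, for such monomial spaces both property 6 and property 7 are equivalent to the single inclusion $W(2^{n+1})\subseteq W(2^n)W(2^n)$; property 8 (for $n\notin S$, using $|W(2^n)|=2$ so that the two members of $W(2^n)$ play the role of $m_1,m_2$) becomes the assertion that some $w\in W(2^n)$ is a prefix of no element of $W(2^{n+1})$; property 4 becomes the assertion that the two monomials forming $W(2^{2^i-\lfloor\log i\rfloor})$ lie outside the support of $F_i$ (recall that by Theorem~\ref{existFi}, $F_i$ is the $K$-span of a set of monomials); and properties 1 and 2 become the cardinality conditions $|W(2^n)|=2$ for $n\notin S$ and $|W(2^{2^i-i-\lfloor\log i\rfloor+j})|=2^{2^j}$ for $0\le j\le i-1$.

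The construction then splits into three regimes governed by the position of $n$ relative to $S_i=[\,2^i-i-\lfloor\log i\rfloor,\,2^i-\lfloor\log i\rfloor-1\,]$; here I would first note, from the spacing of $Z$ in Lemma~\ref{enumerate}, that the $S_i$ are separated by long runs of integers outside $S$ and that both $2^i-i-\lfloor\log i\rfloor-1$ and the ``exit level'' $2^i-\lfloor\log i\rfloor$ lie outside $S$. For the base, take $W(2)$ to be any two distinct monomials of degree $2$, one called the \emph{stem} and one the \emph{tail}. For the step from $n$ to $n+1$: \emph{(i)} if $n\notin S$, say $W(2^n)=\{s,t\}$ with stem $s$ and tail $t$, put $W(2^{n+1})=\{ss,st\}$ with new stem $ss$ and new tail $st$ — this keeps the cardinality $2$ (the correct value $2^{2^0}$ also when $n+1$ is the left endpoint of some $S_i$), and $t$ is a prefix of neither element, so property 8 holds at $n$; \emph{(ii)} if $n\in S_i$ and $n$ is not the right endpoint of $S_i$, put $W(2^{n+1})=W(2^n)W(2^n)$, squaring the cardinality and so carrying $2^{2^j}$ to $2^{2^{j+1}}$ along $S_i$; \emph{(iii)} if $n$ is the right endpoint of $S_i$, then $|W(2^n)|=2^{2^{i-1}}$, hence $|W(2^n)W(2^n)|=2^{2^i}$, and since $\dim_K F_i<2^{2^i}-2$ there are at least three monomials of $W(2^n)W(2^n)$ outside the support of $F_i$; take $W(2^{n+1})$ to be two of them (one stem, one tail), so that $|W(2^{n+1})|=2$, $W(2^{n+1})\subseteq W(2^n)W(2^n)$, and $F_i\subseteq U(2^{n+1})$, with property 8 at level $n+1$ then supplied by case (i) at the next step.

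What remains is to check the eight properties against this construction, which is routine: 3 and 5 by fiat; 6 and 7 because $W(2^{n+1})\subseteq W(2^n)W(2^n)$ always; 1 because every $n\notin S$ is reached from an earlier level by case (i) or (iii), each outputting a two-element set; 2 by iterating case (ii) from the two-element set produced by case (i) at the left endpoint of $S_i$; 4 as exactly case (iii) at $n=2^i-\lfloor\log i\rfloor-1$; and 8 at each $n\notin S$ because the transition out of $n$ is case (i), whose output has a common prefix, so the tail serves as $m_2$. The only delicate point, and the place the numerics are tight, is case (iii): the bound $\dim_K F_i<2^{2^i}-2$ of Theorem~\ref{existFi} is exactly what guarantees that, against the $2^{2^i}$ monomials accumulated in $W(2^n)W(2^n)$ at the right end of $S_i$, two monomials missing from $F_i$ still remain — this single step both brings $\dim V$ back down to $2$ and lets $U$ absorb $F_i$. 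Verifying that the three regimes tile $\mathbb{N}$ with no overlap, which is forced by the spacing of $Z$, is the only other point requiring attention.
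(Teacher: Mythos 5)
Your construction runs correctly through Cases (i) and (ii), and your reduction of properties 3, 5, 6, 7 to the single inclusion $W(2^{n+1})\subseteq W(2^n)W(2^n)$ together with the bijectivity of concatenation is sound. The gap is in Case (iii), the exit from $S_i$, where you write ``recall that by Theorem~\ref{existFi}, $F_i$ is the $K$-span of a set of monomials.'' Theorem~\ref{existFi} asserts only that $F_i$ is a $K$-linear subspace of $H(2^{2^i-\lfloor\log i\rfloor})$ of dimension $<2^{2^i}-2$; it says nothing about monomiality. The space $F_i$ arises by applying the cited theorem of Smoktunowicz to the polynomial $f_i$, which is an arbitrary element of $\bar A$, and the spanning set one obtains consists of polynomials built from $f_i$ --- generically not monomials. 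So your Case (iii), which picks two monomials of $W(2^n)W(2^n)$ outside the ``support'' of $F_i$ and lets the remaining monomials absorb $F_i$, cannot even be formulated: a non-monomial $F_i$ need not lie in any monomial complement of a two-dimensional monomial subspace of $V(2^n)V(2^n)$.

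The paper escapes this by noting that property 3 only requires $V(2^n)$ to be monomial; $U(2^n)$ is allowed to be non-monomial, and indeed becomes so at every exit from $S_i$. Concretely, with $n=2^i-\lfloor\log i\rfloor-1$, each basis element $f_j$ of $F_i$ is decomposed as $f_j=\bar f_j+g_j$ with $\bar f_j\in V(2^n)V(2^n)$ and $g_j\in U(2^n)U(2^n)+U(2^n)V(2^n)+V(2^n)U(2^n)$; one lets $P$ be the span of $\bar f_1,\dots,\bar f_s$, uses $\dim P\le s<2^{2^i}-2=\dim V(2^n)V(2^n)-2$ to find two \emph{monomials} $m_1,m_2\in V(2^n)V(2^n)$ with $(Km_1+Km_2)\cap P=0$, chooses a (non-monomial) complement $Q\supseteq P$ with $V(2^n)V(2^n)=Q\oplus(Km_1+Km_2)$, and sets $U(2^{n+1})=U(2^n)U(2^n)+U(2^n)V(2^n)+V(2^n)U(2^n)+Q$. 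Then $g_j$ lands in the first three summands, $\bar f_j\in P\subseteq Q$, so $f_j\in U(2^{n+1})$ and property 4 holds. Your framework can be repaired by dropping the requirement that $U(2^n)$ be monomial and making exactly this move in Case (iii); everything else you wrote then survives intact.
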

\begin{proof}
    The proof of properties (1) to (7) is very similar to the proof of
    \cite[Theorem~3]{ls} and the proof of property (8) is similar to the proof
    of \cite[Theorem~10(8)]{as}. We construct the sets $U(2^n)$ and $V(2^n)$
    inductively. Set $V(2^{0})= Kx+Ky$ and $U(2^{0})=0$. Assume that
    we have defined $V(2^{m})$ and $U(2^{m})$ for $m\leq n$ in such a way that
    conditions 1-5 hold for all $m\leq n$ and conditions 6,7 and 8 hold for
    all $m<n$. Then we define $V(2^{n+1})$ and $U(2^{n+1})$ inductively, in
    the following way. Consider the three cases

\begin{itemize}
        \item[1.] $n\in S$ and $n+1\in S$.
        \item[2.] $n\notin S$.
        \item[3.] $n\in S$ and $n+1\notin S$.
    \end{itemize}

    {\bf Case 1.} If $n\in S$ and $n+1\in S$, define $U(2^{n+1}) =
    H(2^n)U(2^n) + U(2^n)H(2^n)$ and $V(2^{n+1})=V(2^{n})V(2^{n})$. Conditions
    6, 7 certainly hold. If, by induction, conditions 5 and 3 hold for
    $U(2^n)$ and $V(2^n)$, they hold for $U(2^{n+1})$ and $V(2^{n+1})$ as
    well. Moreover, $\dim_{K}V(2^n)=(\dim_{K}V(2^n))^2$, inductively
    satisfying condition 2.

    {\bf Case 2.} Suppose that $n\notin S$. Then $\dim_{K}V(2^{n})=2$, as is
    generated by monomials, by the inductive hypothesis. Let $m_1, m_2$ be the
    distinct monomials that generate $V(2^n)$. Then $V(2^n)V(2^n)=Km_1 m_1 +
    Km_1 m_2+Km_2 m_1 + Km_2 m_2$. Set $V(2^{n+1})=Km_1 m_1+Km_1 m_2$, so that
    conditions 1, 3, 7 and 8 hold.

    Set $U(2^{n+1}) = H(2^n)U(2^n) + U(2^n)H(2^n) + m_2 V(2^n)$.
    Using this definition, condition 6 holds and
    \begin{eqnarray*}
    \lefteqn{H(2^{n+1}) = H(2^n)H(2^n)}\\
    &=&U(2^n)U(2^n) \oplus U(2^n)V(2^n) \oplus V(2^n)U(2^n)
        \oplus m_1 V(2^n) \oplus m_2 V(2^n)\\
    &=&U(2^{n+1}) \oplus V(2^{n+1})
    \end{eqnarray*}
    Thus condition 5 holds.

    {\bf Case 3.} Suppose that $n\in S$ while $n+1\notin S$. Then
    $n=2^i-\lfloor log(i)\rfloor-1$ for some $i\in Z$. By induction on
    condition 2, $\dim_K V(2^n)= \dim_K V(2^{2^i-i-\lfloor
    log(i)\rfloor+i-1})=2^{2^{i-1}}$, and $\dim_K
    V(2^n)V(2^n)=2^{2^{i-1}}2^{2^{i-1}}=2^{2^i}$. By induction on condition 5,
    $$
    H(2^{n+1}) = U(2^n)U(2^n) \oplus U(2^n)V(2^n)
    \oplus V(2^n)U(2^n) \oplus    V(2^n)V(2^n).
    $$
    We know that $F_i$ has a basis $\{f_{1}, \dots ,f_{s}\}$
    for some $f_{1},\dots , f_{s}\in H(2^{2^{i}}-\lfloor log(i)\rfloor)$ and
    $s<2^{2^{i}}-2$. Each $f_j$ can be uniquely decomposed into $\bar{f}_j+g_j$
    with $\bar {f}_j \in V(2^{n})V(2^{n})$ and $g_{j}\in V(2^{n})U(2^{n})
    +U(2^{n})U(2^{n})+U(2^{n})V(2^{n})$. Let $P$ the subspace spanned by
    $\bar{f}_1, ..., \bar{f}_s$.

    Since $\dim_K P \leq s = \dim F_i < 2^{2^i} - 2 < \dim_K V(2^n)V(2^n) - 2$, there must
    exist at least two monomials $m_1, m_2 \in V(2^n)V(2^n)$ such that the space $K m_1 + K m_2$
    is disjoint from $P$.  Define $V(2^{n+1})$ as this space; This satisfies conditions 1, 3 and 7.

    As $P$ is disjoint from $K m_1 + K m_2$, there must exist a space $Q \supseteq P$ such that
    $V(2^n)V(2^n) = Q \oplus (K m_1, K m_2)$.  Set:
        $$U(2^{n+1}) = U(2^n)U(2^n) + U(2^n)V(2^n) + V(2^n)U(2^n) + Q$$
    This immediately satisfies conditions 5 and 6.  Since each polynomial $f_i = g_i + \bar{f}_i \in
    U(2^{n+1})$, it satisfies condition 4 as well.
\end{proof}

Before continuing, a helpful lemma concerning of $U(2^n)$ should be mentioned.

\begin{lemma}\label{Ustack}
    For any $m \geq n$, and any $0 \leq k < 2^{m-n}$,
    \[
    H(k
    2^n)U(2^n)H((2^{m-n}-k-1) 2^n) \subseteq U(2^m).
    \]
\end{lemma}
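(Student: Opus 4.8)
The plan is to induct on $m - n$. The base case $m = n$ is trivial: here the only admissible value is $k = 0$, and the claimed containment reads $H(0)\,U(2^n)\,H(0) = U(2^n) \subseteq U(2^n)$, which holds. For the inductive step, suppose the statement holds whenever the gap is $m - n - 1$ (for every choice of base exponent), and consider $m > n$ with some $k$ satisfying $0 \le k < 2^{m-n}$. The idea is to ``fold'' one level of the dyadic stacking: write the ambient degree $2^{m-n}2^n$ as a concatenation of $2^{m-1-n}$ blocks of length $2^{n+1} = 2\cdot 2^n$, locate which block contains the single copy of $U(2^n)$, and use property~(6) of Theorem~\ref{8props}, namely $H(2^n)U(2^n) + U(2^n)H(2^n) \subseteq U(2^{n+1})$, to absorb that block into a copy of $U(2^{n+1})$. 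Then apply the inductive hypothesis with $n$ replaced by $n+1$.

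Concretely, first I would split into two cases according to the parity of $k$. If $k = 2\ell$ is even, then the copy of $U(2^n)$ sits at the start of the $\ell$-th block of length $2^{n+1}$, so
\[
H(k\,2^n)\,U(2^n)\,H((2^{m-n}-k-1)2^n)
= H(\ell\,2^{n+1})\bigl(U(2^n)\,H(2^n)\bigr)H\bigl((2^{m-n}-k-2)2^n\bigr),
\]
and $(2^{m-n} - k - 2)2^n = (2^{m-1-n} - \ell - 1)2^{n+1}$. If $k = 2\ell+1$ is odd, then the copy of $U(2^n)$ occupies the second half of the $\ell$-th block, so similarly
\[
H(k\,2^n)\,U(2^n)\,H((2^{m-n}-k-1)2^n)
= H(\ell\,2^{n+1})\bigl(H(2^n)\,U(2^n)\bigr)H\bigl((2^{m-1-n}-\ell-1)2^{n+1}\bigr).
\]
In either case the middle factor $U(2^n)H(2^n)$ or $H(2^n)U(2^n)$ lies in $U(2^{n+1})$ by property~(6), and $0 \le \ell < 2^{m-1-n} = 2^{m-(n+1)}$, so the whole expression is contained in $H(\ell\,2^{n+1})\,U(2^{n+1})\,H((2^{m-(n+1)} - \ell - 1)2^{n+1})$, which lies in $U(2^m)$ by the inductive hypothesis applied with base exponent $n+1$ (the gap $m - (n+1)$ being one less). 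This closes the induction.

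The only mildly delicate point is the bookkeeping of the degree identities $(2^{m-n} - k - 1)2^n$ versus a multiple of $2^{n+1}$ in each parity case, and making sure the inequality $0 \le k < 2^{m-n}$ translates correctly to $0 \le \ell < 2^{m-n-1}$; this is routine arithmetic. There is no real obstacle, since property~(6) of Theorem~\ref{8props} is exactly the tool that powers the induction; the lemma is essentially a formal iteration of that single containment.
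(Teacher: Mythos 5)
Your proof is correct, and it takes a genuinely different (and in a precise sense dual) route from the paper's. The paper inducts on $m$ with $n$ fixed: to show a product of degree $2^{m+1}$ lies in $U(2^{m+1})$, it splits the ambient degree into two halves of degree $2^m$, observes that $U(2^n)$ falls entirely in one half, applies the inductive hypothesis to that half to land in $U(2^m)$, and then absorbs the other half using property~(6). Your argument instead keeps $m$ fixed and inducts on the gap $m-n$ by merging from the bottom: you pair the $2^n$-blocks into $2^{n+1}$-blocks, locate $U(2^n)$ within its pair, absorb the adjacent $H(2^n)$ into a $U(2^{n+1})$ via property~(6), and invoke the inductive hypothesis for base exponent $n+1$. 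Both proofs rest on exactly the same tool, property~(6) of Theorem~\ref{8props}, and both are clean; the paper's top-down split has slightly simpler bookkeeping (the case split is ``which half contains $U$'' rather than the parity of $k$), while your bottom-up fold makes more transparent that the result is literally an iterated application of property~(6). Your arithmetic checks out, including the point that when $k=2^{m-n}-1$ (the only $k$ for which you cannot peel an $H(2^n)$ off the right) you are automatically in the odd case, so there is no gap.
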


\begin{proof}
    If $m = n$, then $k = 0$ and the equation is trivially true.
    Using induction, assume the theorem holds true for some $m \geq n$.
    When  $0 \leq k < 2^{m-n}$
    \begin{eqnarray*}
    \lefteqn{H(k 2^n)U(2^n)H((2^{m+1-n}-k-1) 2^n) =}\\
    &&H(k 2^n)U(2^n)H((2^{m-n}-k-1) 2^n) H(2^m)\subseteq
    U(2^m)H(2^m) \subseteq U(2^{m+1}),
    \end{eqnarray*}
    and when
     $2^{m-n} \leq k < 2^{m+1-n}$
    \begin{eqnarray*}
    \lefteqn{H(k 2^n)U(2^n)H((2^{m+1-n}-k-1) 2^n) =}\\
    &&H(2^m) H((k-2^{m-n}) 2^n)U(2^n)H((2^{m+1-n}-k-1) 2^n)\\
    &\subseteq&
    H(2^m)U(2^m) \subseteq U(2^{m+1}),
    \end{eqnarray*}
    as required.
\end{proof}

Another way of stating Lemma~\ref{Ustack} is that, given any product of the
form $H(i 2^n)U(2^n)H(j 2^n)$, if the sum of the three arguments $i 2^n + 2^n
+ j 2^n$ is a power of $2$ then
$H(i 2^n)U(2^n)H(j 2^n) \subseteq U(i 2^n + 2^n +
j 2^n)$.



\section{A finitely generated infinite dimensional nil algebra}

A graded subspace $E\subseteq \bar{A}$ is formed by defining its
homogeneous subspace
$E(n)$ to be the set of elements $r\in H(n)$ such that if $2^{m}\leq
n<2^{m+1}$ then for all $0\leq j\leq 2^{m+2}-n$,
    $$H(j)rH(2^{m+2}-j-n)\subseteq
    U(2^{m+1})H(2^{m+1})+ H(2^{m+1})U(2^{m+1})$$
Now, define $E:=E(1)+E(2)+ ... $.

\begin{theorem}\label{AbaroverEnil}
    The subset $E$ is an ideal in $\bar{A}$. Moreover $\bar{A}/E$ is a nil
    algebra and is
    infinite dimensional over $K$.
\end{theorem}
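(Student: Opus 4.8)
The plan is to establish the three assertions of Theorem~\ref{AbaroverEnil} in turn: that $E$ is an ideal of $\bar A$, that $\bar A/E$ is infinite dimensional, and that $\bar A/E$ is nil. For the ideal property, note first that $E$ is homogeneous by construction, so it suffices to check that $H(k)E(n)\subseteq E$ and $E(n)H(k)\subseteq E$ for all $k,n$. Given $r\in E(n)$ and a monomial $u\in H(k)$, one must verify that every homogeneous component of $ur$ (which sits in $H(n+k)$) lies in the appropriate $E(n+k)$; this amounts to checking the defining condition $H(j)(ur)H(2^{m'+2}-j-n-k)\subseteq U(2^{m'+1})H(2^{m'+1})+H(2^{m'+1})U(2^{m'+1})$ where $2^{m'}\le n+k<2^{m'+1}$. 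Since $H(j)ur H(\cdots)=H(j+k)\,r\,H(\cdots)$ up to regrouping, and $2^{m'}$ is at least as large as the relevant power of $2$ for $n$ alone, this should follow by massaging the definition of $E(n)$ together with the ``stacking'' behaviour of the $U(2^\ell)$; Lemma~\ref{Ustack} and property (6) of Theorem~\ref{8props} are the tools that let one pass between windows of different sizes. The argument is symmetric on the right, and closure under addition is immediate from homogeneity and linearity.

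For infinite dimensionality, the point is that $E$ is by design \emph{small} in each graded piece: I would show that for each $n$ the codimension of $E(n)$ in $H(n)$ is positive, indeed that $\bar A/E$ contains elements of unbounded degree that are nonzero. Concretely, using property (5) ($V(2^m)\oplus U(2^m)=H(2^m)$) and property (8), one exhibits for each $m\notin S$ a monomial $m_1\in V(2^m)$ whose powers (or suitable products) are never forced into $U$, hence never into $E$; since $\dim_K V(2^m)\ge 2$ always, the images of these monomials in $\bar A/E$ are linearly independent across degrees, forcing $\dim_K\bar A/E=\infty$. The separation of $H(2^m)$ into the ``bad'' part $U(2^m)$ and the $2$-dimensional (or larger) ``good'' part $V(2^m)$ is exactly what keeps $E$ from swallowing everything.

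For nilness — which I expect to be the main obstacle — the strategy is to show that an arbitrary $f\in\bar A$ maps to a nilpotent element of $\bar A/E$, i.e.\ that some power $f^N\in E$. Here the enumeration of Lemma~\ref{enumerate} enters: $f=f_i$ for some $i\in Z$, and Theorem~\ref{existFi} gives a space $F_i\subseteq H(2^{2^i-\lfloor\log i\rfloor})$ of codimension at least $2$ with $I:=\langle f_i^{10w_i}\rangle\subseteq\sum_k H(k\,2^{2^i-\lfloor\log i\rfloor})F_iA$, while property (4) of Theorem~\ref{8props} puts $F_i\subseteq U(2^{2^i-\lfloor\log i\rfloor})$. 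Combining these, every homogeneous component of $f_i^{10w_i}$ of degree divisible by $2^{2^i-\lfloor\log i\rfloor}$ lies in $H(\cdot)U(2^{2^i-\lfloor\log i\rfloor})A$, and then Lemma~\ref{Ustack} (together with property (6)) promotes such a factor up to $U(2^{m+1})$ for the relevant large window, so that after multiplying on left and right by arbitrary elements of $A$ the result lands in $U(2^{m+1})H(2^{m+1})+H(2^{m+1})U(2^{m+1})$ — which is precisely the defining condition for membership in $E$. The delicate bookkeeping is: (a) controlling which homogeneous components of $f_i^{10w_i}$ occur, since $f_i$ need not be homogeneous, so one works with a high enough power to guarantee a component whose degree is a multiple of $r=2^{2^i-\lfloor\log i\rfloor}$ sitting inside a dyadic block of the right size; (b) checking the numerology $2^i-i-\lfloor\log i\rfloor\le \cdots$ so that $S_i$, the window $2^{2^i-\lfloor\log i\rfloor}$, and the ambient power of $2$ in the definition of $E$ all line up; and (c) using the super-exponential gaps in $Z$ from Lemma~\ref{enumerate} to ensure the ``bad'' blocks for different $i$ do not interfere. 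Granting that these estimates go through as in \cite{ls} and \cite{smok}, one concludes $f_i^{N}\in E$ for a suitable $N$, hence $\bar A/E$ is nil.

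I would therefore organize the write-up as three lemmas feeding the theorem: (i) $E$ is a two-sided ideal (formal, from homogeneity plus Lemma~\ref{Ustack} and Theorem~\ref{8props}(6)); (ii) $\dim_K \bar A/E=\infty$ (from Theorem~\ref{8props}(5),(8)); (iii) for each $i\in Z$, a sufficiently high power of $f_i$ lies in $E$ (the substantive step, from Theorem~\ref{existFi}, Theorem~\ref{8props}(4),(6), and Lemma~\ref{Ustack}, with the index arithmetic of Lemma~\ref{enumerate}). The hard part is unquestionably (iii): making the passage from ``$f_i^{10w_i}$ generates an ideal contained in $H(\cdot)F_iA$'' to ``$f_i^{N}\in E$'' rigorous requires carefully tracking degrees modulo $r$, handling the inhomogeneity of $f_i$, and verifying that the relevant dyadic window $2^{m+1}$ in the definition of $E$ is large enough to absorb a $U(r)$-factor via repeated application of Lemma~\ref{Ustack}.
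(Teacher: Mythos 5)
The paper's own proof of this theorem is a single sentence: it cites \cite[Theorems 5, 14, 15]{ls} and asserts that those proofs carry over verbatim to the present construction. Your proposal instead sketches the underlying argument, and your three-part organization (ideal, infinite-dimensional, nil) and choice of tools --- Lemma~\ref{Ustack}, Theorem~\ref{8props}(4), (5), (6), (8), Theorem~\ref{existFi}, and the enumeration of Lemma~\ref{enumerate} --- do match the structure of the cited results. To that extent the outline is sound, and you are right that (iii) is the substantive step.

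There is, however, a concrete gap in your sketch of (iii). You assert that Theorem~\ref{existFi} together with $F_i \subseteq U(r)$, where $r = 2^{2^i - \lfloor \log i \rfloor}$, places the relevant components of $f_i^{10w_i}$ in $\sum_k H(kr)U(r)A$, and that Lemma~\ref{Ustack} then ``promotes'' the $U(r)$ factor up to $U(2^{m+1})$ ``after multiplying on left and right by arbitrary elements of $A$.'' But Lemma~\ref{Ustack} needs \emph{both} flanking degrees to be multiples of $r$. In the definition of $E(n)$, the index $j$ in $H(j)\,r\,H(2^{m+2}-j-n)$ runs over \emph{all} $0 \leq j \leq 2^{m+2}-n$, so the left factor $H(j+kr)$ is generally not a multiple of $r$, and the naive application of Lemma~\ref{Ustack} does not go through. (A smaller misstatement: the graded containment $I \subseteq \sum_k H(kr)F_iA$ constrains components of \emph{every} degree, not only those divisible by $r$, so the restriction ``of degree divisible by $r$'' is not the right framing.) Bridging this --- which is the actual content of \cite[Theorem 14]{ls} --- requires exploiting the precise arithmetic relationship between $r$, the exponent $10w_i$, and the dyadic window $2^{m+2}$, together with the one-sided slack in the target $U(2^{m+1})H(2^{m+1}) + H(2^{m+1})U(2^{m+1})$. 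Your item (b), ``checking the numerology,'' is exactly where this lives and would have to be carried out in full; as written, the sketch asserts the conclusion at the very point where Lemma~\ref{Ustack} alone is insufficient. Since the paper itself defers to \cite{ls} for precisely this, the omission is understandable, but the proposal does not yet constitute a proof.
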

\begin{proof}
    The set $E$ is shown to be an ideal in \cite[Theorem 5]{ls}, and
    \cite[Theorems 14,15]{ls} prove that $\bar{A}/E$ is both nil
    and
    infinite dimensional over $K$.
    No changes to these proofs
    need to be made to apply to our example,
    and so the proofs are not repeated here.
\end{proof}



\section{The subspaces $R$, $S$, $Q$, $W$}
The key to computing the Gelfand-Kirillov dimension of the algebra
$\bar{A}/E$ is to use a
collection of subspaces $R,S, Q, W$ with the following properties:
if $n>0$, $2^m\leq n < 2^{m+1}$ then
        \begin{align*}  R(n)H(2^{m+1}-n)\subseteq U(2^{m+1})&\qquad
    H(2^{m+1}-n)S(n)\subseteq U(2^{m+1})\\
    H(n)=R(n)\oplus Q(n)&\qquad H(n)=S(n)\oplus W(n)
    \end{align*}
It then follows from Theorem~\ref{Ustack} that for any $k>n$,
$R(n)H(2^k-n)\subseteq U(2^k)$ and $H(2^k-n)S(n)\subseteq
U(2^k)$.

The existence of suitable such subspaces is established in the next section.
Once this has been acheived, the following theorem is available to help
calculate the Gelfand-Kirillov dimension of $\bar{A}/E$. 
(In this theorem we take $R(0)=S(0)=U(0)=0$ and
$V(0)=Q(0)=W(0)=K$.)

\begin{theorem}\label{totalsize}
    For every $n \in \mathbb{N}$,
    $$\bigcap_{k=0}^{n}S(n-k)H(k)+H(n-k)R(k)\subseteq E(n)$$
Moreover,
    $$\dim(H(n)/E(n))\leq \sum_{k=0}^{n}\dim(W(n-k))\dim(Q(k))$$
\end{theorem}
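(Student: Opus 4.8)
The plan is to prove the two assertions separately, with the first (the containment in $E(n)$) feeding into the second (the dimension bound). For the containment, fix $n$ with $2^m \le n < 2^{m+1}$ and take an element $r$ lying in $\bigcap_{k=0}^n \big(S(n-k)H(k) + H(n-k)R(k)\big)$. I want to show $r \in E(n)$, i.e. that for every $0 \le j \le 2^{m+2}-n$ we have $H(j)\,r\,H(2^{m+2}-j-n) \subseteq U(2^{m+1})H(2^{m+1}) + H(2^{m+1})U(2^{m+1})$. The idea is to split the ``outer'' degrees $j$ and $2^{m+2}-j-n$ at the midpoint $2^{m+1}$. There are two cases. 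If $j \ge 2^{m+1}$, write $H(j) = H(2^{m+1})H(j-2^{m+1})$; then using the membership of $r$ in $S(n-k)H(k)+H(n-k)R(k)$ for the appropriate $k$ (namely the one making $n-k$ the ``distance to the previous power of $2$''), and the consequence of Lemma~\ref{Ustack} quoted right before this theorem --- $H(2^k - (n-k))S(n-k) \subseteq U(2^k)$ --- we land inside $H(2^{m+1})U(2^{m+1})$. Symmetrically, if $2^{m+2}-j-n \ge 2^{m+1}$, we peel $H(2^{m+1})$ off the right-hand factor and use $R(k)H(2^k-k) \subseteq U(2^k)$ to land inside $U(2^{m+1})H(2^{m+1})$. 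The one thing to check is that at least one of these two cases always applies: since $j + (2^{m+2}-j-n) = 2^{m+2}-n > 2^{m+1}$ (as $n < 2^{m+1}$), the two outer factors cannot both have degree $< 2^{m+1}$, so one of them has degree $\ge 2^{m+1}$ and the argument goes through. Choosing $k$ correctly so that the degrees line up to a power of $2$ and Lemma~\ref{Ustack} applies is the point requiring care.

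For the dimension bound, the first part gives
\[
\dim\big(H(n)/E(n)\big) \le \dim\Big(H(n)\Big/\bigcap_{k=0}^{n}\big(S(n-k)H(k)+H(n-k)R(k)\big)\Big).
\]
Now I use the standard fact that for any finite family of subspaces $\{L_k\}$ of a vector space $H$, $\dim(H/\bigcap_k L_k) \le \sum_k \dim(H/L_k)$, applied with $L_k = S(n-k)H(k) + H(n-k)R(k)$. So it suffices to bound $\dim\big(H(n)/(S(n-k)H(k)+H(n-k)R(k))\big)$ by $\dim(W(n-k))\dim(Q(k))$ for each $k$. This is where the decompositions $H(n-k) = S(n-k)\oplus W(n-k)$ and $H(k) = R(k)\oplus Q(k)$ enter: the quotient $H(n)/(S(n-k)H(k) + H(n-k)R(k))$ is spanned by (images of) products $ab$ with $a \in H(n-k)$, $b \in H(k)$, and modulo $S(n-k)H(k)$ we may take $a \in W(n-k)$, while modulo $H(n-k)R(k) \supseteq W(n-k)R(k)$ we may take $b \in Q(k)$. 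Hence the quotient is spanned by the images of $\{ab : a \in \text{basis of } W(n-k),\ b \in \text{basis of } Q(k)\}$, giving the bound $\dim(W(n-k))\dim(Q(k))$. Summing over $k$ yields the claimed inequality, with the convention $R(0)=S(0)=U(0)=0$, $V(0)=Q(0)=W(0)=K$ handling the endpoint terms ($k=0$ contributes $\dim W(n)$, $k=n$ contributes $\dim Q(n)$).

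The main obstacle, I expect, is the bookkeeping in the first part: one must verify that for the given $r$ and the given $j$, the correct index $k$ can be selected so that (i) the relevant inner degree equals $2^{\text{something}} - (\text{factor degree})$ on the nose and (ii) after multiplying by the leftover homogeneous factors the total degree is exactly $2^{m+2}$, so that Lemma~\ref{Ustack} (in the ``sum of arguments is a power of $2$'' form) can be invoked. In particular one needs $k$ to range so that $n-k$ hits the range $[2^m, 2^{m+1})$ appropriately relative to $j$; tracking which power of $2$ each sub-product sums to, and confirming the nested products stay inside $U$ of that power, is the delicate accounting. Everything else --- the subadditivity of codimension under intersection, and the spanning argument via the direct-sum decompositions --- is routine linear algebra.
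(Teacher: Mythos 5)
Your proof of the dimension bound is correct and matches the paper's argument: both use the direct-sum decomposition $H(n) = \bigl(S(n-k)H(k) + H(n-k)R(k)\bigr) \oplus W(n-k)Q(k)$ together with subadditivity of codimension over the intersection. For the containment claim, the paper omits the proof, deferring to \cite[Theorem~9]{ls}, but your sketch of that part contains a genuine error.

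You assert that at least one of $j$ and $2^{m+2}-j-n$ must be at least $2^{m+1}$, on the grounds that their sum $2^{m+2}-n$ exceeds $2^{m+1}$. That inference is invalid: a sum greater than $2^{m+1}$ does not force one summand to be at least $2^{m+1}$. For every integer $j$ with $2^{m+1}-n < j < 2^{m+1}$ (a nonempty range as soon as $n>1$) both $j$ and $2^{m+2}-j-n$ are strictly less than $2^{m+1}$; for example $m=1$, $n=2$, $j=3$ gives both outer degrees equal to $3 < 4 = 2^{m+1}$. This missing third case is not incidental: it is precisely the case in which the cut at degree $2^{m+1}$ lands strictly inside the degree-$n$ block occupied by $r$, and it is exactly here that the hypothesis $r \in S(n-k)H(k)+H(n-k)R(k)$ for the intermediate value $k := j+n-2^{m+1}$ (so $0 < k < n$) is needed to split $r$ as a sum of terms each factoring compatibly with the midpoint cut, so that Lemma~\ref{Ustack} can then place the result in $U(2^{m+1})H(2^{m+1})$ or $H(2^{m+1})U(2^{m+1})$. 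The intersection over all $k$ from $0$ to $n$ is there precisely to cover these interior cuts; as written, your argument does not establish the containment.
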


\begin{proof}
    The proof of the first claim is very similar to the proof
    of \cite[Theorem~9]{ls} and so is  omitted.
    Notice that,
            \begin{eqnarray*}
            H(n)&=&(S(n-k)\oplus W(n-k))(R(k)\oplus Q(k))\\
            &=&(S(n-k)H(k)+H(n-k)R(k)) \oplus W(n-k)Q(k).
            \end{eqnarray*}
    Therefore,
    \begin{eqnarray*} \dim E(n)&\geq&
    \dim\left(\bigcap_{k=0}^{n}(S(n-k)H(k)+H(n-k)R(k))\right)\\
    &\geq&\dim(H(n))-\sum_{k=0}^{n}\dim(W(n-k)Q(k))
    \end{eqnarray*}
    and so
    $$\dim(H(n)/E(n))\leq
    \sum_{k=0}^{n}\dim(W(n-k))Q(k)),$$
    as required.
\end{proof}



\section{A sufficiently small $Q$ and $W$}

In order to define $R$, $S$, $Q$ and $W$, begin with
$R(1)=S(1)=U(1)$, $Q(1)=W(1)=V(1)$. Given any natural number $j$
with  $2^{m}\leq j< 2^{m+1}$, define
    $$R(j)=\{r\in H(j): rH(2^{m+1}-j)\subseteq U(2^{m+1})\}$$ and
    $$S(j)=\{r\in H(j): H(2^{m+1}-j)r\subseteq U(2^{m+1})\}.$$

\begin{theorem}\label{Wsize}
    Let $j$ be a natural number.
    Write $j$ in  binary form as
        $$j=2^{p_0}+2^{p_1}+...+2^{p_n}$$
    with $0\leq p_0<p_1<... <p_n$. Then there is a $K$-linear space
    $W(j)\subseteq H(j)$ such that $W(j)\oplus S(j)=H(j)$ and
        $$W(j)\subseteq V(2^{p_0})...V(2^{p_n})=\prod_{i=0}^n V(2^{p_i})$$
\end{theorem}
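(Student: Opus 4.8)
The plan is to induct on the number of binary digits of $j$, or equivalently to peel off the top power of $2$. Write $j = 2^{p_0} + \cdots + 2^{p_n}$ with $p_0 < \cdots < p_n$, and set $m = p_n$, so that $2^m \leq j < 2^{m+1}$. Let $j' = j - 2^m = 2^{p_0} + \cdots + 2^{p_{n-1}}$. If $j' = 0$ then $j = 2^m$ and we may take $W(j) = V(2^m)$, since $V(2^m) \oplus U(2^m) = H(2^m)$ by Theorem~\ref{8props}(5) and $U(2^m) \subseteq S(2^m)$ (because $H(0)U(2^m) = U(2^m) \subseteq U(2^{m+1})$ by Lemma~\ref{Ustack}, so the base case is immediate). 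So assume $j' \geq 1$ and that the theorem holds for $j'$, giving a space $W(j') \subseteq \prod_{i=0}^{n-1} V(2^{p_i})$ with $W(j') \oplus S(j') = H(j')$.

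The key step is to combine $W(j')$ with $V(2^m)$ on the left. Since $2^{p_{n-1}} \le j' < 2^m$, I would first check that $H(j') = S(j')$: indeed by Lemma~\ref{Ustack} (applied with $n$ there equal to $p_{n-1}$, or more simply since $j'$ together with $2^m - j'$ and padding sums to a power of $2$) combined with the stacking remark after Lemma~\ref{Ustack}, every element of $H(j')$ multiplied on the left by $H(2^{m+1}-j-2^m+j')$ — wait, more carefully: for $r \in H(j')$, $H(2^{m+1}-j')r \subseteq H(2^{m+1}-j')H(j') = H(2^{m+1})$, and I need this inside $U(2^{m+1})$. This is the crux and needs the structure of $U$, not a triviality; I will return to it. Granting it, $S(j') = H(j')$, hence $W(j') = 0$ in a degenerate way — so that cannot be right, and instead the correct mechanism must be: write a candidate $W(j) = V(2^m)\,W(j')$ (a product inside $H(2^m)H(j') = H(j)$), show $W(j) \subseteq V(2^m)\prod_{i<n}V(2^{p_i}) = \prod_{i=0}^{n}V(2^{p_i})$ using the inductive inclusion, and then show $W(j) + S(j) = H(j)$ and that the sum can be made direct by shrinking $W(j)$ if necessary.

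For the decomposition $W(j) + S(j) = H(j)$: start from $H(j) = H(2^m)H(j')$. Using $H(2^m) = U(2^m) \oplus V(2^m)$ from Theorem~\ref{8props}(5), write $H(j) = U(2^m)H(j') + V(2^m)H(j')$. The summand $U(2^m)H(j')$ lies in $S(j)$: for $r \in U(2^m)H(j')$ we need $H(2^{m+1}-j)r \subseteq U(2^{m+1})$, and $H(2^{m+1}-j)U(2^m)H(j') \subseteq U(2^{m+1})$ follows from Lemma~\ref{Ustack} once we observe $(2^{m+1}-j) + 2^m + j' = 2^{m+1}$, applied at ``block size'' $2^{p_?}$ — this requires $2^{m+1}-j$, $j'$ to be expressible as sums of blocks aligned with the recursion, which is exactly where the binary expansion is used. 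Next, for the $V(2^m)H(j')$ part, decompose $H(j') = W(j') \oplus S(j')$ inductively, giving $V(2^m)H(j') = V(2^m)W(j') + V(2^m)S(j')$; the piece $V(2^m)S(j')$ should again land in $S(j)$ by a stacking argument (here one uses that $V(2^m) \subseteq H(2^m)$ and $S(j')$ absorbs $H(2^{m+1-?}-j')$ into $U$, then $H(2^m)U(\cdot) \subseteq U(\cdot)$). That leaves $V(2^m)W(j')$, which is the desired $W(j)$. Finally, to make $W(j) \oplus S(j) = H(j)$ exact, intersect: replace $W(j)$ by a complement of $W(j) \cap S(j)$ inside $W(j)$; this only shrinks $W(j)$, so the containment $W(j) \subseteq \prod V(2^{p_i})$ is preserved.

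The main obstacle is the bookkeeping in the stacking arguments: Lemma~\ref{Ustack} is stated for products $H(k2^n)U(2^n)H((2^{m-n}-k-1)2^n)$ with all blocks of a single size $2^n$, whereas here the relevant exponents $2^{m+1}-j$ and $j' = \sum_{i<n}2^{p_i}$ are sums of blocks of \emph{different} sizes. Handling this cleanly will require either iterating Lemma~\ref{Ustack} down the tower of dyadic scales $2^{p_0}, 2^{p_1}, \dots, 2^{p_{n-1}}, 2^m$, or proving a mild strengthening that allows mixed block sizes. Everything else — the direct-sum adjustment, the containment in $\prod V(2^{p_i})$, and the base case — is routine linear algebra given Theorem~\ref{8props}.
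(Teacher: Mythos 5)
Your proposal runs into a concrete orientation error that makes both of its key steps fail. First, the containment you need is $W(j)\subseteq V(2^{p_0})\cdots V(2^{p_n})$, with $V(2^{p_n})=V(2^m)$ appearing as the \emph{rightmost} factor, so the equality you write, $V(2^m)\prod_{i<n}V(2^{p_i})=\prod_{i=0}^n V(2^{p_i})$, is false: in a free noncommutative algebra these are different subspaces of $H(j)$. Second, and more seriously, the orientation clashes with the definition of $S(j)$, which tests \emph{left} multiplication by $H(2^{m+1}-j)$. If you peel $U(2^m)$ off on the left, then for $u\in U(2^m)$, $h\in H(j')$ you must show $H(2^m-j')\,u\,h\subseteq U(2^{m+1})$; here the blocks $2^m-j'$ and $j'$ are not multiples of $2^m$, and Lemma~\ref{Ustack} gives you nothing at scale $2^m$. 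If instead you peel on the \emph{right} and write $r=h u$, then $H(2^m-j')\,h\,u=H(2^m)U(2^m)\subseteq U(2^{m+1})$ immediately by Theorem~\ref{8props}(6); likewise the term $S(j')V(2^m)$ lands in $S(j)$ by applying Lemma~\ref{Ustack} at scale $2^{p_{n-1}+1}$, because $H(2^m-2^{p_{n-1}+1})U(2^{p_{n-1}+1})H(2^m)$ has all outer blocks divisible by $2^{p_{n-1}+1}$ and total degree $2^{m+1}$. So the mixed-block bookkeeping you flag is actually manageable once the sides are swapped, but your argument as written does not go through.

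By contrast the paper does this in one shot, without induction: it expands $H(j)=\prod_{i=0}^n\bigl(U(2^{p_i})\oplus V(2^{p_i})\bigr)$ with \emph{low} bits on the left (matching the product ordering in the statement), obtaining $H(j)=\bigl(\sum_{i=0}^n T_{p_i}(j)\bigr)\oplus\prod_{i=0}^n V(2^{p_i})$ where $T_{p_i}(j)=H(2^{p_0}+\cdots+2^{p_{i-1}})U(2^{p_i})H(2^{p_{i+1}}+\cdots+2^{p_n})$. Multiplying $T_{p_i}(j)$ on the left by $H(2^{p_n+1}-j)$ makes the degrees on either side of $U(2^{p_i})$ multiples of $2^{p_i}$ summing to $2^{p_n+1}$, so a single application of Lemma~\ref{Ustack} shows $T_{p_i}(j)\subseteq S(j)$; then $W(j)$ is taken to be a complement of $S(j)$ chosen inside $\prod V(2^{p_i})$. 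Your induction would essentially reprove this term by term; it is salvageable after fixing the left/right orientation, but as it stands the central claim $U(2^m)H(j')\subseteq S(j)$ is not established and the product-inclusion is in the wrong order.
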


\begin{proof}
    By Theorem \ref{8props}(5), $H(2^{p_i})=U(2^{p_i})\oplus V(2^{p_i})$ for
    $i=1,2, ..., n$. Hence, $H(j)=\prod_{i=0}^n (U(2^{p_i}) \oplus
    V(2^{p_i}))$, and
    $$H(j) = \left(\displaystyle\sum_{i=0}^n
    H(2^{p_0}+...+2^{p_{i-1}})U(2^{p_i})H(2^{p_{i+1}}+...+2^{p_n}) \right)
    \oplus \displaystyle\prod_{i=0}^n V(2^{p_i})$$ Define $T_{p_i}(j)$ as
    $H(2^{p_0}+...+2^{p_{i-1}})U(2^{p_i})H(2^{p_{i+1}}+...+2^{p_n})$, so that
    $H(j) = (\sum_{i=0}^n T_{p_i}(j)) \oplus \prod_{i=0}^n V(2^{p_i})$.

    Now, from the definition of $T_{p_i}(j)$, we obtain
        \begin{eqnarray*}
        \lefteqn{H(2^{p_n+1}-j)T_{p_i}(j) =
        H(2^{p_n+1}-(2^{p_i}+...+2^{p_n}))U(2^{p_i})H(2^{p_{i+1}}+...+2^{p_n})
        =}\\
        &\hspace{-1ex}&
        H\left((2^{p_n+1-p_i}-(2^0+...+2^{p_n-p_i}))2^{p_i}\right)U(2^{p_i})
        H\left((2^{p_{i+1}-p_i}+...+2^{p_n-p_i})2^{p_i}\right)
        \end{eqnarray*}
    It follows from Lemma~\ref{Ustack} that $H(2^{p_n+1}-j)T_{p_i}(j)
    \subseteq U(2^{p_n+1})$; so that each $T_{p_i}(j)
    \subseteq S(j)$. Thus, there must exist some $W(j) \subseteq
    \prod_{i=0}^n V(2^{p_i})$ such that $S(j) \oplus W(j) = H(j)$. To see this
    more clearly, choose a basis of $(\prod_{i=0}^n V(2^{p_i}) + S(j))/S(j)$,
    pull this basis
    back to elements in $\prod_{i=0}^n V(2^{p_i})$, and let
    $W(j)$ be the subspace generated by that basis.
\end{proof}

Next, sets $N(2^{i})$ are defined in a similar way to the procedure
used in  \cite{as}.

Let $i\notin S$. Then, by Theorem~\ref{8props}(8), each $V(2^i)$
is generated by
two monomials $m_{1,i}$ and $m_{2,i}$, with $m_{2,i}H(2^i)\subseteq
U(2^{i+1})$. Define $N(2^i)=Km_{1,i}$, and $M(2^i)=U(2^i)+Km_{2,i}$. In the
case where $i\in S$, simply set $N(2^i)=V(2^i)$, $M(2^i)=U(2^i)$. Observe that
for every $i$, $N(2^i)\oplus M(2^i)=H(2^i)$. These sets will be used to
construct $Q(n)$.

\begin{lemma}
    For any integer $0\leq m<2^{k-1}$,
    \[
    H(m2^{n+1})M(2^n)H((2^k-2m-1)2^n)\subseteq U(2^{n+k}).
    \]
\end{lemma}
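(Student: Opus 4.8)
The plan is to proceed by induction on $k$, using the definition $M(2^n) = U(2^n) + Km_{2,n}$ when $n \notin S$ (and $M(2^n) = U(2^n)$ when $n \in S$), together with the two ingredients available: Lemma~\ref{Ustack} for the $U(2^n)$ part, and the property $m_{2,n}H(2^n) \subseteq U(2^{n+1})$ from Theorem~\ref{8props}(8) for the monomial part.

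First I would dispose of the $U(2^n)$ summand of $M(2^n)$ once and for all: since $m2^{n+1} + 2^n + (2^k - 2m - 1)2^n = 2^{n+k}$ and all three arguments are nonnegative multiples of $2^n$ (note $m2^{n+1} = 2m\cdot 2^n$ with $2m < 2^k$), Lemma~\ref{Ustack} applied with the base block $2^n$ and target $2^{n+k}$ gives directly
$$
H(m2^{n+1})\,U(2^n)\,H((2^k-2m-1)2^n) \subseteq U(2^{n+k}).
$$
So the whole content is the term $H(m2^{n+1})\,Km_{2,n}\,H((2^k-2m-1)2^n)$ in the case $n \notin S$. Here I would use $m_{2,n}H(2^n) \subseteq U(2^{n+1})$ to "absorb" one block to the right: $Km_{2,n}H((2^k-2m-1)2^n) \subseteq U(2^{n+1})H((2^k-2m-2)2^n)$, and then reorganize this as $U(2^{n+1})H((2^{k-1}-m-1)2^{n+1})$, so that altogether
$$
H(m2^{n+1})\,m_{2,n}\,H((2^k-2m-1)2^n) \subseteq H(m2^{n+1})\,U(2^{n+1})\,H((2^{k-1}-m-1)2^{n+1}).
$$
Now the three arguments are multiples of $2^{n+1}$ summing to $2^{n+k} = 2^{(n+1)+(k-1)}$, and $m < 2^{k-1}$, so Lemma~\ref{Ustack} (with base block $2^{n+1}$) applies again and yields $\subseteq U(2^{n+k})$.

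Combining the two cases, $H(m2^{n+1})M(2^n)H((2^k-2m-1)2^n) \subseteq U(2^{n+k})$ in all cases. I do not actually expect to need an induction on $k$ after all — Lemma~\ref{Ustack} already packages the needed iteration — so the argument reduces to the two applications of Lemma~\ref{Ustack} above, bridged by the single rewrite using property (8). The one point requiring care, and the likely main obstacle, is bookkeeping the exponents: one must check that $2^k - 2m - 1 \geq 0$ so the middle rewrite leaves a nonnegative block, that $2^k - 2m - 2 = 2(2^{k-1} - m - 1)$ is a nonnegative multiple of $2^{n+1}$ (which holds since $m \leq 2^{k-1}-1$), and that the hypothesis "$0 \le$ (index) $< 2^{(\text{top})-(\text{base})}$" of Lemma~\ref{Ustack} is met in both invocations (namely $2m < 2^k$ in the first, $m < 2^{k-1}$ in the second). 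These are all immediate from $0 \le m < 2^{k-1}$, so no genuine difficulty arises.
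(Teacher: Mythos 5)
Your proof is correct and follows essentially the same route as the paper: absorb one block of $H(2^n)$ into $M(2^n)$ to land in $U(2^{n+1})$, regroup the remaining factors as multiples of $2^{n+1}$, and apply Lemma~\ref{Ustack}. The only cosmetic difference is that you split $M(2^n)=U(2^n)+Km_{2,n}$ and treat the two summands separately, whereas the paper uses the single inclusion $M(2^n)H(2^n)\subseteq U(2^{n+1})$ (valid for both summands by Theorem~\ref{8props}(6) and (8)) and therefore needs no case distinction; your separate handling of the $U(2^n)$ summand via Lemma~\ref{Ustack} in base $2^n$ is an unnecessary but harmless detour.
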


\begin{proof}
    By definition, $M(2^n)H(2^n) \subseteq U(2^{n+1})$. Using this fact
    and
    Lemma~\ref{Ustack},
    \begin{eqnarray*}
    \lefteqn{H(m2^{n+1})M(2^n)H((2^k-2m-1)2^n) \subseteq}\\
    &\subseteq& H(m2^{n+1})U(2^{n+1})H((2^{k-1}-m-1)2^{n+1})
    \subseteq U(2^{n+k}),
    \end{eqnarray*}
    as required.
\end{proof}

\begin{theorem}\label{Qsize}
    Let $j \in \mathbb{N}$. Write $j$ in binary form as
        $$j=2^{p_0}+2^{p_1}+...+2^{p_n}$$
    with $0\leq p_0<p_1<...<p_n$, and suppose $n \neq 0$
    (that is, $j$ is not a power of $2$).
    Then there is linear space $Q(j)\subseteq H(j)$ such that
    $Q(j)\oplus R(j)=H(j)$ and
        $$Q(j)\subseteq N(2^{p_n})N(2^{p_{n-1}})...N(2^{p_0})
        =\displaystyle\prod_{i=0}^n N(2^{p_{n-i}})
        \subseteq\displaystyle\prod_{i=0}^n V(2^{p_{n-i}})$$
\end{theorem}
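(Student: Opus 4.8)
The plan is to mirror the proof of Theorem~\ref{Wsize} (the ``$W$-side'' statement), but working on the other side so that the relevant containments land in $R(j)$ rather than $S(j)$, and using the sets $N(2^{p_i})$ (and their complements $M(2^{p_i})$) in place of $V(2^{p_i})$ and $U(2^{p_i})$. First I would record the basic direct-sum decomposition $H(2^{p_i}) = M(2^{p_i}) \oplus N(2^{p_i})$, which holds for every $i$ by the construction of $N$ and $M$ preceding the preceding Lemma. Multiplying these decompositions together in the order $p_n, p_{n-1}, \dots, p_0$ gives
\[
H(j) = \left( \sum_{i=0}^{n} H(2^{p_n}+\dots+2^{p_{i+1}})\,M(2^{p_i})\,H(2^{p_{i-1}}+\dots+2^{p_0}) \right) \oplus \prod_{i=0}^{n} N(2^{p_{n-i}}),
\]
so I would name the $i$-th summand $T'_{p_i}(j)$ and aim to show each $T'_{p_i}(j) \subseteq R(j)$.

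The core computation is to check that $T'_{p_i}(j)\,H(2^{p_n+1}-j) \subseteq U(2^{p_n+1})$. Here the number of letters to the left of the $M(2^{p_i})$ block is $2^{p_n}+\dots+2^{p_{i+1}}$, which is a multiple of $2^{p_i+1}$ (every exponent appearing is $\geq p_{i+1} > p_i$), say $m\,2^{p_i+1}$; and the number of letters to the right, namely $(2^{p_{i-1}}+\dots+2^{p_0}) + (2^{p_n+1}-j)$, works out to $(2^{p_n+1-p_i} - 2m - 1)2^{p_i}$ after substituting $j = 2^{p_0}+\dots+2^{p_n}$ and simplifying. That is exactly the shape $H(m2^{n+1})M(2^n)H((2^k-2m-1)2^n)$ handled by the Lemma immediately preceding this theorem (with $n = p_i$, $k = p_n+1-p_i$), so that Lemma gives the desired containment in $U(2^{p_n+1})$. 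Since $2^{p_n}\le j<2^{p_n+1}$, this says precisely $T'_{p_i}(j)\subseteq R(j)$ by the definition of $R(j)$.

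Having shown $\sum_{i=0}^n T'_{p_i}(j) \subseteq R(j)$, I would finish exactly as in Theorem~\ref{Wsize}: choose a basis of $\bigl(\prod_{i=0}^n N(2^{p_{n-i}}) + R(j)\bigr)/R(j)$, pull it back to elements of $\prod_{i=0}^n N(2^{p_{n-i}})$, and let $Q(j)$ be their span. Then $Q(j) \subseteq \prod_{i=0}^n N(2^{p_{n-i}})$ and $Q(j) \oplus R(j) = H(j)$ because $R(j) \supseteq \sum_i T'_{p_i}(j)$ and the two pieces $\sum_i T'_{p_i}(j)$ and $\prod_i N(2^{p_{n-i}})$ already spanned $H(j)$ with trivial intersection. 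The final inclusion $\prod_{i=0}^n N(2^{p_{n-i}}) \subseteq \prod_{i=0}^n V(2^{p_{n-i}})$ is immediate since $N(2^{p}) \subseteq V(2^{p})$ for every $p$ (it is either $V(2^p)$ itself or $Km_{1,p}$).

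The step I expect to be the main obstacle is the bookkeeping in the second paragraph: correctly verifying that the left-hand block length is a multiple of $2^{p_i+1}$ and that the right-hand block length collapses to $(2^{p_n+1-p_i}-2m-1)2^{p_i}$, so that the preceding Lemma applies verbatim. One must also be slightly careful with the boundary indices $i=0$ and $i=n$ (empty products of $H$'s on one side), and with the hypothesis $n\neq 0$, which guarantees $p_n \geq 1$ so that $2^{p_n+1}-j$ is a genuine nonnegative block length and the decomposition has at least two factors. Everything else is routine and parallels \cite{ls} and \cite{as}.
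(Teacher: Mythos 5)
Your proposal is correct and follows essentially the same route as the paper: decompose $H(j)$ via the $N/M$ splitting, show each summand $H(2^{p_n}+\dots+2^{p_{i+1}})M(2^{p_i})H(2^{p_{i-1}}+\dots+2^{p_0})$ lands in $R(j)$ after multiplying by $H(2^{p_n+1}-j)$, and then extract $Q(j)$ from a basis of the quotient. The only cosmetic difference is that you invoke the unnamed lemma immediately preceding the theorem directly (which is clearly what it was proved for), whereas the paper's written proof re-derives that containment inline using $M(2^{p_i})H(2^{p_i})\subseteq U(2^{p_i+1})$ and Lemma~\ref{Ustack}; your bookkeeping for the block lengths and the bound $m<2^{k-1}$ checks out.
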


\begin{proof}
    This proof is very similar to the one for Theorem~\ref{Wsize}. By
    definition, $H(2^{p_i})=N(2^{p_i})\oplus M(2^{p_i})$ for $i=1,2,...,n$.
    Hence, $H(j)\subseteq \prod_{i=0}^{n}(N(2^{p_i})\oplus M(2^{p_i}))$, and
    $$H(j) = \left(\displaystyle\sum_{i=0}^n
    H(2^{p_n}+...+2^{p_{i+1}})M(2^{p_i})H(2^{p_{i-1}}+...+2^{p_0}) \right)
    \oplus \displaystyle\prod_{i=0}^n N(2^{p_i})$$
    Set $B_{p_i}(j)
    :=H(2^{p_n}+...+2^{p_{i+1}})M(2^{p_i})H(2^{p_{i-1}}+...+2^{p_0})$,
    so that
    $H(j) = \sum_{i=0}^n B_{p_i}(j) \oplus \prod_{i=0}^n N(2^{p_i})$.

    Multiplying on the right by $H(2^{p_n+1}-j)$ we obtain
    \begin{eqnarray*}
    B_{p_i}(j)H(2^{p_n+1}-j)
    &=&
    H(2^{p_n}+...+2^{p_{i+1}})M(2^{p_i})H(2^{p_n+1}-(2^{p_n}+...+2^{p_i}))\\
    &=&
    H\left((2^{p_n-p_i-1}+...+2^{p_{i+1}-p_i-1})2^{p_i+1}\right)
    M(2^{p_i})H(2^{p_i})\times \\
    &&H\left((2^{p_n-p_i}-(2^{p_n-p_i-1}+...+2^{p_{i+1}-p_i-1}-1))
    2^{p_i+1}\right) \\
    &\subseteq&
    H\left((2^{p_n-p_i-1}+...+2^{p_{i+1}-p_i-1})2^{p_i+1}\right)
    U(2^{p_i+1})\times \\
    &&H\left((2^{p_n-p_i}-(2^{p_n-p_i-1}+...+2^{p_{i+1}-p_i-1}-1))
    2^{p_i+1}\right)
    \end{eqnarray*}
    It follows from Lemma~\ref{Ustack} that
    $B_{p_i}(j)H(2^{p_n+1}-j) \subseteq U(2^{p_n+1})$;
    so that each $B_{p_i}(j) \subseteq R(j)$.
    By exactly the same reasoning
    as in Theorem~\ref{Wsize}, there must exist some
    $Q(j) \subseteq \prod_{i=0}^n N(2^{p_{n-i}})$
    such that $R(j) \oplus Q(j) = H(j)$.
\end{proof}

One last theorem about the size of $Q$ and $W$ must
be obtained  before continuing.
\begin{theorem}\label{Qadd}
    Suppose that $j, k \in \mathbb{N}$ have binary forms
        $$k = 2^{p_0} + ... + 2^{p_{i-1}}\qquad
        j = 2^{p_i} + ... + 2^{p_n}.$$
    with $p_0 < ... < p_n$.  Then $\dim Q(j+k) \leq \dim Q(j) \dim Q(k)$ and
    $\dim W(j+k) \leq \dim W(j) \dim W(k)$.
\end{theorem}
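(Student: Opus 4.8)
The plan is to prove both inequalities by the same device: to exhibit a factorisation of $H(j+k)$ of the shape ``(a controlled product)$+$(an $R$- or $S$-space)'', so that $\dim Q(j+k)$, which is the codimension of $R(j+k)$ in $H(j+k)$, and $\dim W(j+k)$, the codimension of $S(j+k)$, are each bounded by the dimension of the controlled product, and that product has dimension at most $\dim Q(j)\dim Q(k)$, resp.\ $\dim W(j)\dim W(k)$.

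Write $m:=j+k=2^{p_0}+\cdots+2^{p_n}$; its top bit is $2^{p_n}$, so $2^{p_n}\le m<2^{p_n+1}$ and, by definition, $R(m)=\{r\in H(m):rH(2^{p_n+1}-m)\subseteq U(2^{p_n+1})\}$, with the analogous description of $S(m)$. The top bit of $j$ is likewise $2^{p_n}$ and that of $k$ is $2^{p_{i-1}}$; the crucial numerical point is that $p_i\ge p_{i-1}+1$, so that both $j$ and $\beta:=2^{p_n+1}-j-2^{p_{i-1}+1}$ are divisible by $2^{p_{i-1}+1}$, where $\beta\ge 0$ because $j\le 2^{p_n+1}-2^{p_i}\le 2^{p_n+1}-2^{p_{i-1}+1}$. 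I would then establish the four inclusions
\[
R(j)H(k)\subseteq R(m),\qquad H(j)R(k)\subseteq R(m),\qquad H(k)S(j)\subseteq S(m),\qquad S(k)H(j)\subseteq S(m).
\]
The first is immediate, since for $r\in R(j)$ and $h\in H(k)$ one has $(rh)H(2^{p_n+1}-m)\subseteq r\,H(k)H(2^{p_n+1}-m)=rH(2^{p_n+1}-j)\subseteq U(2^{p_n+1})$, using only the defining property of $R(j)$; the third is its left--right mirror. For $H(j)R(k)\subseteq R(m)$: given $h\in H(j)$ and $s\in R(k)$, split $H(2^{p_n+1}-m)=H(2^{p_{i-1}+1}-k)H(\beta)$, apply the defining property of $R(k)$ to get $s\,H(2^{p_{i-1}+1}-k)\subseteq U(2^{p_{i-1}+1})$, and hence $hsH(2^{p_n+1}-m)\subseteq H(j)U(2^{p_{i-1}+1})H(\beta)$; since $j$ and $\beta$ are multiples of $2^{p_{i-1}+1}$ and $j+2^{p_{i-1}+1}+\beta=2^{p_n+1}$, Lemma~\ref{Ustack} gives containment in $U(2^{p_n+1})$. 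The fourth inclusion is obtained by reflecting this argument left to right.

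Finally I would assemble the conclusion. Since $H(m)=H(j)H(k)=(Q(j)\oplus R(j))(Q(k)\oplus R(k))$ and the three ``cross'' summands together equal $H(j)R(k)+R(j)H(k)\subseteq R(m)$, we get $H(m)=Q(j)Q(k)+R(m)$; as $H(m)=Q(m)\oplus R(m)$ this forces $\dim Q(m)\le\dim\bigl(Q(j)Q(k)\bigr)\le\dim Q(j)\dim Q(k)$. Running the identical argument with $H(m)=H(k)H(j)=(W(k)\oplus S(k))(W(j)\oplus S(j))$ and the two $S$-inclusions yields $H(m)=W(k)W(j)+S(m)$ and hence $\dim W(m)\le\dim W(j)\dim W(k)$. (The product $Q(j)Q(k)$, with the high-bit factor on the left, and $W(k)W(j)$, with the low-bit factor on the left, are exactly the orderings under which $Q(m)$ and $W(m)$ are nested inside products of the $N$'s and $V$'s in Theorems~\ref{Qsize} and~\ref{Wsize}, although that fact is not needed here.) The degenerate cases $k=0$ or $j=0$ are trivial from the conventions $Q(0)=W(0)=K$, and when $j$ or $k$ is itself a power of $2$ one first notes $M(2^p)\subseteq R(2^p)$ and $U(2^p)\subseteq R(2^p)\cap S(2^p)$, using Theorem~\ref{8props}, so that complements $Q(2^p)\subseteq N(2^p)$ and $W(2^p)\subseteq V(2^p)$ still exist. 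I expect the only real friction to be the bookkeeping in the two ``hard'' inclusions: pinning down the intermediate power $2^{p_{i-1}+1}$, checking the divisibility hypotheses of Lemma~\ref{Ustack}, and keeping the two sides correctly oriented.
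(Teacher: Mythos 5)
Your proof is correct and takes essentially the same route as the paper: it rests on the same two inclusions (the easy one from the definition of $R(j)$, the hard one from splitting $H(2^{p_n+1}-j-k)$ at the intermediate power $2^{p_{i-1}+1}$ and invoking Lemma~\ref{Ustack} via the divisibility of $j$ and $\beta$ by $2^{p_{i-1}+1}$), followed by the dimension count. The only deviations are cosmetic: you prove the marginally stronger $H(j)R(k)\subseteq R(j+k)$ where the paper is content with $Q(j)R(k)\subseteq R(j+k)$, and you spell out the mirrored $S$/$W$ inclusions which the paper dispatches as ``analogous''.
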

\begin{proof}
    Use the definition of $Q$ to see that:
        $$H(j+k) = (R(j) \oplus Q(j))(R(k) \oplus Q(k)) = R(j)H(k) \oplus Q(j)R(k) \oplus Q(j)Q(k)$$
    If it can be shown that $R(j)H(k) + Q(j)R(k) \subseteq R(j+k)$, then
    \begin{eqnarray*}
    \lefteqn{\dim Q(j+k) = \dim H(j+k) - \dim R(j+k) \leq}\\
        &&\dim H(j) \dim H(k) - \dim R(j) \dim H(k) - \dim Q(j) \dim R(k)\\
        &=&
        \dim Q(j) \dim H(k) - \dim Q(j) \dim R(k) = \dim Q(j) \dim Q(k),
    \end{eqnarray*}
    which establishes the $Q$ inequality.

    In order to show that $R(j)H(k) \subseteq R(j+k)$, note that $2^{p_n} <
    j+k < 2^{p_n+1}$, and recall from the definition of $R(j)$ that
    $$R(j)H(k)
    \cdot H(2^{p_n+1}-j-k) = R(j)H(2^{p_n+1}-j) \subseteq U(2^{p_n+1})$$
    so that $R(j)H(k) \subseteq R(j+k)$ by the definition of $R(j+k)$.

    Finally, to show that $Q(j)R(k) \subseteq R(j+k)$, note that $2^{p_{i-1}} \leq
    k < 2^{p_{i-1}+1}$ and
    \begin{eqnarray*}
    Q(j)R(k)H(2^{p_n+1}-j-k)
    &=&
    Q(j)\left(R(k)H(2^{p_{i-1}+1}-k)\right)H(2^{p_n+1}-2^{p_{i-1}+1}-j)\\
    &\subseteq&
    H(j)U(2^{p_{i-1}+1})H(2^{p_n+1}-2^{p_{i-1}+1}-j).
    \end{eqnarray*}
    As  each of $j$ and
    $2^{p_n+1}-2^{p_{i-1}+1}-j$ is divisible by $2^{p_{i-1}+1}$,
    Theorem~\ref{Ustack} reveals that $Q(j)R(k)H(2^{p_n+1}-j-k) \subseteq
    H(2^{p_n+1})$ and so $Q(j)R(k) \subseteq R(j+k)$.

    An analogous argument is used to prove the inequality for $W$.
\end{proof}



\section{Inequalities}

In this section we will prove, using induction, that for all $n>1$,
    $$\dim Q(n), \dim W(n)\leq 8\sqrt{n} ( \log n )^3$$

This result is obtained by combining the following theorems.

\begin{theorem}\label{WQsmall}
    If $2^m < n < 2^{m+1}$, then
        $$\dim W(n) \leq \dim Q(2^{m+1}-n) \dim V(2^{m+1})$$
    and
        $$\dim Q(n) \leq \dim W(2^{m+1}-n) \dim V(2^{m+1})$$
\end{theorem}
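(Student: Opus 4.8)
The plan is to prove the two inequalities symmetrically; I will describe the argument for $\dim W(n)$, the one for $\dim Q(n)$ being entirely analogous with the roles of left and right multiplication exchanged (so $W$ is replaced by $Q$, $S$ by $R$, and Theorem~\ref{Wsize} by Theorem~\ref{Qsize}). Fix $n$ with $2^m < n < 2^{m+1}$ and set $t = 2^{m+1} - n$, so that $0 < t < 2^m$ and $n + t = 2^{m+1}$. The idea is to factor $H(2^{m+1}) = H(n)H(t)$ through the decompositions $H(n) = S(n)\oplus W(n)$ and $H(t) = R(t)\oplus Q(t)$ (note $t < 2^m \le n$, so $Q(t)$ is available from Theorem~\ref{Qsize} when $t$ is not a power of $2$; when $t$ is a power of $2$ one takes $Q(t) = V(t)$ and $R(t) = U(t)$, consistent with the conventions declared before Theorem~\ref{totalsize}). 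This gives
$$H(2^{m+1}) = S(n)H(t) + W(n)R(t) + W(n)Q(t).$$

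First I would show that $S(n)H(t) + W(n)R(t) \subseteq U(2^{m+1})$. For the first summand this is immediate from the definition of $S(n)$: since $H(2^{m+1}-n)S(n)\subseteq U(2^{m+1})$... wait — $S(n)$ is defined by \emph{left} multiplication, so in fact $S(n)$ sits on the left here, and I should instead use that $R(n)H(t)$-type products land in $U$. Let me restate: the correct pairing is $H(n) = R(n)\oplus Q(n)$ on the left and $H(n) = S(n)\oplus W(n)$; since we are multiplying $H(n)$ on the \emph{left} of $H(t)$, I use $R(n)H(t) \subseteq R(n)H(2^{m+1}-n) \subseteq U(2^{m+1})$ by definition of $R(n)$, and on the other factor I use that $H(t)$ contains $S(t)$ with $H(n)S(t) = H(2^{m+1}-t)S(t)\subseteq U(2^{m+1})$. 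Combining, $H(2^{m+1}) = R(n)H(t) + Q(n)S(t) + Q(n)W(t)$ with the first two summands inside $U(2^{m+1})$, so $H(2^{m+1}) = U(2^{m+1}) + Q(n)W(t)$, whence $\dim H(2^{m+1})/U(2^{m+1}) \leq \dim Q(n)\dim W(t)$.

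Now $\dim H(2^{m+1})/U(2^{m+1}) = \dim V(2^{m+1})$ by Theorem~\ref{8props}(5). Since $Q(n)$ is a complement to $R(n)$ and $W(t)$ a complement to $S(t)$, and since $V(2^{m+1}) \oplus U(2^{m+1}) = H(2^{m+1})$, the surjection $Q(n)\otimes W(t) \twoheadrightarrow H(2^{m+1})/U(2^{m+1}) \cong V(2^{m+1})$ does not by itself bound $\dim Q(n)$ from above — I need the reverse. The point is that $R(n)$ and $S(t)$ are defined by the \emph{same} power $U(2^{m+1})$, and $R(n)H(t) + Q(n)S(t) = \ker$ of the map $H(2^{m+1}) \to H(2^{m+1})/U(2^{m+1})$ restricted appropriately; more carefully, one shows $R(n)H(t) + Q(n)S(t) = R(n)H(t) + H(n)S(t)$ has codimension exactly $\dim Q(n)\dim W(t)$ in $H(2^{m+1})$ if the sum is direct modulo lower pieces, and since this codimension equals $\dim V(2^{m+1})$ we would get $\dim Q(n)\dim W(t) \geq \dim V(2^{m+1})$, the wrong direction. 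The resolution — and the main obstacle — is to instead apply Theorem~\ref{Qadd}-style reasoning with $k = t$, $j = n$ reversed: we want $\dim W(n) \le \dim Q(t)\dim V(2^{m+1})$, i.e. replace $n$ by the \emph{larger} index. So I would run the factorization as $H(2^{m+1}) = H(t)H(n)$ instead, using $H(2^{m+1}) = S(n)$-complement structure on the right factor and the $U$-absorption $H(t)\cdot S(n) \subseteq H(2^{m+1}-n)S(n) \subseteq U(2^{m+1})$ directly, giving $H(2^{m+1}) = U(2^{m+1}) + H(t)W(n)$; but also $H(t) = R(t) + Q(t)$ with $R(t)H(n) \subseteq R(t)H(2^m) \subseteq \cdots$ — here I must be careful that $t < 2^m$ so $R(t)$ is defined relative to $U(2^m)$, not $U(2^{m+1})$, and I would invoke Lemma~\ref{Ustack} to lift: $R(t)H(2^m - t) \subseteq U(2^m)$ and then $U(2^m)H(2^m) \subseteq U(2^{m+1})$, so $R(t)H(n) \subseteq R(t)H(2^m-t)H(n-2^m+t)\cdot(\text{pad}) \subseteq U(2^{m+1})$. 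Hence $H(2^{m+1}) = U(2^{m+1}) + Q(t)W(n)$, giving $\dim V(2^{m+1}) = \dim H(2^{m+1})/U(2^{m+1}) \le \dim Q(t)\dim W(n)$ — still the wrong direction!

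The genuinely correct approach, which I would commit to, is the following: by Theorem~\ref{Wsize}, $W(n) \subseteq \prod V(2^{p_i})$ where $n = 2^{p_0}+\cdots+2^{p_r}$; since $2^m < n < 2^{m+1}$ we have $p_r = m$, so $W(n) \subseteq \bigl(\prod_{i<r} V(2^{p_i})\bigr) V(2^m) \subseteq H(n - 2^m) V(2^m)$, and $n - 2^m = 2^m - t$... no, $n - 2^m$ need not be $2^{m+1}-n$. Instead, note $W(n) \subseteq V(2^{p_0})\cdots V(2^{p_{r-1}})V(2^m)$; the first $r$ factors multiply to a subspace of $H(n-2^m)$, but we want to relate to $Q(2^{m+1}-n)$. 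Here one uses that $2^{m+1} - n = 2^m - (n - 2^m)$, and if $n - 2^m = 2^{q_0}+\cdots+2^{q_s}$ then $2^{m+1}-n = 2^m - 2^{q_0} - \cdots - 2^{q_s}$ has binary digits exactly the positions in $\{0,\ldots,m-1\}$ \emph{not} among the $q_j$, i.e.\ among the $p_i$ for $i < r$ is \emph{not} the same set. So this combinatorial identification is the heart of the matter. I would prove: the map sending a product $v_0\cdots v_{r-1}\cdot v$ (with $v_i \in V(2^{p_i})$, $v \in V(2^m)$) to its class in $H(2^{m+1})/U(2^{m+1})$ is injective on $W(n)$ when composed with right multiplication by a suitable fixed monomial in $\prod Q(2^{p'_j})$ realizing $H(2^{m+1}-n)$ up to $U$, and that the image lies in $V(2^{m+1})$; running the count through Theorem~\ref{8props}(2)(7) and the definition of $Q(2^{m+1}-n)$ then yields $\dim W(n) \le \dim Q(2^{m+1}-n)\dim V(2^{m+1})$. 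I expect the binary-digit bookkeeping — matching the factors of $W(n)$ against those of $Q(2^{m+1}-n)$ so that their concatenation fills out $V(2^{m+1})V(2^{m+1}) \supseteq V(2^{m+2})$ via property (7) — to be the main obstacle, and I would handle it by the same padding-and-lifting technique via Lemma~\ref{Ustack} used in Theorems~\ref{Wsize} and~\ref{Qsize}.
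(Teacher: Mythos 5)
There is a genuine gap: your proposal never actually establishes the inequality. You try several factorizations of $H(2^{m+1})$ through the complementary decompositions $H = S\oplus W$ and $H = R\oplus Q$, and you correctly observe — three separate times — that each one bounds the \emph{codimension} of a large subspace from below and therefore gives the inequality in the wrong direction (a lower bound on $\dim Q\cdot\dim W$ rather than an upper bound on $\dim W$). Your final ``genuinely correct approach'' is only a sketch: you say you ``would prove'' an injectivity statement via unspecified ``binary-digit bookkeeping'' matching factors of $W(n)$ against those of $Q(2^{m+1}-n)$, but this matching is not carried out, the map is not defined precisely, and indeed the binary expansions of $n$ and $2^{m+1}-n$ are \emph{complementary} in $\{0,\dots,m\}$ rather than matching, so the intended pairing does not materialize. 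In short, no version of your argument reaches the conclusion.

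The paper's proof uses a different, much more direct device that you did not find: a rank/pigeonhole argument. Fix a basis $v_1,\dots,v_d$ of $Q(2^{m+1}-n)$ with $d=\dim Q(2^{m+1}-n)$, and define the $K$-linear map
\[
f\colon W(n)\longrightarrow \bigl(H(2^{m+1})/U(2^{m+1})\bigr)^d\cong V(2^{m+1})^d,\qquad
f(c)=\bigl(v_1c+U(2^{m+1}),\dots,v_dc+U(2^{m+1})\bigr).
\]
The key observation — which you do identify in passing but never exploit — is that $R(2^{m+1}-n)H(n)\subseteq U(2^{m+1})$ (via Lemma~\ref{Ustack}, since $2^{m+1}-n<2^m$). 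Therefore, if $c\in\ker f$, i.e.\ $v_ic\in U(2^{m+1})$ for all $i$, then $Q(2^{m+1}-n)c\subseteq U(2^{m+1})$ and hence $H(2^{m+1}-n)c=R(2^{m+1}-n)c+Q(2^{m+1}-n)c\subseteq U(2^{m+1})$, so $c\in S(n)$. Since $c\in W(n)$ and $S(n)\cap W(n)=0$, we get $c=0$: $f$ is injective, and so $\dim W(n)\le d\dim V(2^{m+1})=\dim Q(2^{m+1}-n)\dim V(2^{m+1})$. The inequality for $\dim Q(n)$ follows by the left/right mirror of the same argument. This is the step your proposal is missing; without it, the proof does not close.
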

\begin{proof}
    By the definition, $R(2^{m+1}-n)H(n)\subseteq U(2^{m+1})$.  Therefore, if
    $c\in H(n)$ and $Q(2^{m+1}-n)c\subseteq U(2^{m+1})$, then
    $H(2^{m+1}-n)c\subseteq U(2^{m+1})$ and $c\in S(n)$.

    Let $v_1,...,v_d\in
    Q(2^{m+1}-n)$ be a basis of $Q(2^{m+1}-n)$ over $K$ and
    let $c_1, ... ,c_p\in W(n)$ be a basis of $W(n)$. Suppose that
    $p =\dim(W(n))> \dim Q(2^{m+1}-n)\dim V(2^{m+1}) =d\dim V(2^{m+1})$.

    Define a $K$-linear function
    $f:W(n)\rightarrow (H(2^{m+1})/U(2^{m+1}))^d \cong V(2^{m+1})^d$ by
    setting
    \[f(c):=
    \left((v_1 c+U(2^{m+1})), (v_2 c+U(2^{m+1}),...,(v_d c+U(2^{m+1}))
    \right)
       \]
    for each $c\in W(n)$.

    Observe that $\dim (\text{Im} f) \leq \dim (H(2^{m+1})/U(2^{m+1}))^d = d
    \dim V(2^{m+1})$, and that since $\dim W(n) = p > d \dim V(2^{m+1})$,
    there must exist some non-zero $c \in \ker f$. However, if $(v_i c +
    U(2^{m+1})) = 0$ for each $v_i$, then $Q(2^{m+1}-n) c \in U(2^{m+1})$ and
    $c \in S(n)$. Hence, $c\in S(n) \cap W(n) = \{0\}$, a contradiction. Thus,
    $\dim W(n) = p \leq \dim Q(2^{m+1}-n) \dim V(2^{m+1})$, as required.

    The second inequality can be proven by a similar argument.
\end{proof}

\begin{theorem}\label{Sdim}
    Let $j$ be a natural number. Write $j$ in binary form as
        $$j=2^{p_{0}}+2^{p_{1}}+\ldots +2^{p_{n}}$$
    with $0\leq p_{0}<p_{1}< ... <p_{n}$.
    Recalling the sets $\{S_i\}_{i \in Z}$ from Section~\ref{defuv},
    suppose that there is an $m\in Z$ with $p_0, ..., p_n \in S_m$.
    Then $\dim Q(j)\leq 2\sqrt{j} \lfloor \log j\rfloor$ and
    $\dim W(j)\leq 2\sqrt{j} \lfloor \log j\rfloor$.
\end{theorem}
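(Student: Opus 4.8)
The plan is to bound $\dim Q(j)$ and $\dim W(j)$ by first controlling the factors $\dim V(2^{p})$ that occur in the product representations coming from Theorems~\ref{Qsize} and \ref{Wsize}, and then estimating the resulting product. Since $p_0,\dots,p_n$ all lie in a single $S_m = [2^m - m - \lfloor \log m\rfloor,\, 2^m - \lfloor \log m\rfloor - 1]$, Theorem~\ref{8props}(2) tells us that for each $p_i = 2^m - m - \lfloor \log m\rfloor + t_i$ (with $0 \le t_i \le m-1$) we have $\dim V(2^{p_i}) = 2^{2^{t_i}}$. In particular, writing $j$ in binary, each bit $2^{p_i}$ of $j$ contributes a factor $2^{2^{t_i}}$ to the bound $\dim Q(j) \le \prod_{i=0}^n \dim V(2^{p_{n-i}}) = \prod_{i=0}^n 2^{2^{t_i}}$ (and the same bound for $W(j)$ via Theorem~\ref{Wsize}).

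Next I would estimate this product. Set $\delta = 2^m - m - \lfloor \log m\rfloor$, so $p_i = \delta + t_i$ and hence $2^{p_i} = 2^\delta \cdot 2^{t_i}$. Therefore $j = 2^\delta \sum_{i=0}^n 2^{t_i}$ and $\sqrt{j} = 2^{\delta/2}\sqrt{\sum_i 2^{t_i}} \ge 2^{\delta/2} \cdot 2^{t_n/2}$ since $t_n$ is the largest exponent present. On the other hand $\dim Q(j) \le \prod_i 2^{2^{t_i}} = 2^{\sum_i 2^{t_i}}$. The key elementary inequality to verify is that $\sum_{i=0}^n 2^{t_i} \le 2^{t_n} + 2^{t_n - 1} + \cdots \le 2^{t_n+1}$ (the worst case being all exponents $0,1,\dots,t_n$ present), combined with the crude observation that $\log_2 \dim Q(j) \le 2^{t_n+1}$, while $\log_2(2\sqrt{j}\lfloor \log j\rfloor) \ge \tfrac{1}{2}(\delta + t_n) + \text{(lower order)}$. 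So I must check $2^{t_n+1} \le \tfrac{1}{2}(\delta + t_n) + 1 + \log_2 \lfloor \log j\rfloor$ — but this is \emph{false} in general, which signals that the naive product bound is too weak and one must exploit something sharper.

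The resolution — and the main obstacle — is that the bound $\dim Q(j) \le \prod_i \dim V(2^{p_i})$ from Theorem~\ref{Qsize} is far from tight; one should instead bound $\dim Q(j)$ and $\dim W(j)$ \emph{recursively} using Theorem~\ref{Qadd}, which gives $\dim Q(j+k) \le \dim Q(j)\dim Q(k)$ when the binary digits of $j$ all exceed those of $k$, together with Theorem~\ref{WQsmall}. The point is that within $S_m$, the construction of $U$ and $V$ in Cases 1--3 of Theorem~\ref{8props} was engineered precisely so that $\dim V(2^{p})$ is squared at each step (Case~1) but the spaces $Q, W$ do not inherit this doubling; instead they satisfy $\dim W(2^{p+1}) \le \dim Q(2^{p})\dim V(2^{p+1})$ and one must track the growth carefully. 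So the real argument is an induction on $m$ (and, within a fixed $m$, on the largest bit $p_n$ of $j$): assume the bound $\dim Q, \dim W \le 2\sqrt{\cdot}\,\lfloor\log\cdot\rfloor$ for all smaller arguments whose bits lie in $S_m$ (or in some $S_{m'}$ with $m' < m$), then split $j = 2^{p_n} + j'$ with $j' < 2^{p_n}$, apply Theorems~\ref{Qadd} and \ref{WQsmall} to reduce to the arguments $j'$ and $2^{p_n+1} - j$, and verify the numerical inequality closes. The hard part will be checking that the $\sqrt{j}(\log j)$ form is exactly self-reproducing under these recursions — in particular that the factor from $\dim V(2^{p_n}) = 2^{2^{t_n}}$ is absorbed by the gap between $\sqrt{j}$ and $\sqrt{j'}$, which works only because the exponents within $S_m$ are consecutive integers $t_i \le m-1$ and $2^{m} - \delta = m + \lfloor \log m\rfloor$ is large relative to $m$; I would isolate this as the one genuinely delicate estimate and do it by hand.
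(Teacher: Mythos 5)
There is a genuine gap, and the strategy you sketch would not close if you tried to carry it out. Your diagnosis that the product bound $\dim Q(j)\leq \prod_i \dim V(2^{p_i})$ is ``far from tight'' and must be abandoned in favor of a recursion is off target. The paper's proof uses exactly that product bound; it just needs to be applied to the right $j$. Splitting on whether $j$ is less than, equal to, or greater than $2^{2^m-\lfloor\log m\rfloor-1}$ (equivalently, whether $t_n\leq m-2$, $j$ is a single power of two at the top of $S_m$, or $t_n=m-1$): in the first case $\dim Q(j)\leq \prod_{t=0}^{m-2}2^{2^t}<2^{2^{m-1}}$ and one checks directly that $\sqrt{j}\lfloor\log j\rfloor>2^{2^{m-1}}$ because $p_0\geq 2^m-m-\lfloor\log m\rfloor$; the second case is trivial; and in the third case a single application of Theorem~\ref{WQsmall} trades $j$ for $k=2^{p_n+1}-j$, whose bits again lie in $S_m$ but strictly below the top, so $k$ falls back into the first case. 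That is the whole argument---no induction, and Theorem~\ref{Qadd} is not used at all.

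By contrast, the recursion you propose, $j=2^{p_n}+j'$ together with $\dim Q(j)\leq\dim Q(2^{p_n})\dim Q(j')$ via Theorem~\ref{Qadd}, cannot be made to close numerically: $\dim Q(2^{p_n})$ can be as large as $\dim V(2^{p_n})=2^{2^{t_n}}$, while $\sqrt{j}/\sqrt{j'}$ is bounded by a constant (at most $\sqrt{2}$ when $j'$ is close to $2^{p_n}$), so the factor $2^{2^{t_n}}$ is not absorbed by the $\sqrt{\cdot}$ growth. You flag the closing estimate as ``the one genuinely delicate estimate'' to do by hand and leave it undone; had you attempted it, you would have found it fails for your chosen decomposition. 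The piece you do have right---reducing $j$ with a top bit $p_n$ to $2^{p_n+1}-j$ via Theorem~\ref{WQsmall}---is indeed the key move in the hard case, but you embed it in a recursion that is both unnecessary and unsound, rather than noticing that a single application suffices because the complementary number $k=2^{p_n+1}-j$ already satisfies the easy case.
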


\begin{proof}
    This proof will be divided into three cases:
    \begin{itemize}
        \item[1.] $j < 2^{2^m-\lfloor \log m\rfloor-1}$
        \item[2.] $j = 2^{2^m-\lfloor \log m\rfloor-1}$
        \item[3.] $j > 2^{2^m-\lfloor \log m\rfloor-1}$
    \end{itemize}

    {\bf Case 1.} Suppose that $j<2^{2^m-\lfloor \log m\rfloor-1}$. Then
    $p_n<2^m-\lfloor log(m)\rfloor-1$.
    Notice that $j \geq 2^{2^m - \lfloor \log
    m\rfloor - m}$ and
    \begin{eqnarray*}
    \sqrt{j} \lfloor \log j \rfloor &\geq& 2^{2^{m-1} -
    \lfloor \log m\rfloor/2 - m/2} (2^m - \lfloor \log m\rfloor - m)\\
    &>&2^{2^{m-1} - \lfloor \log m\rfloor/2 - m/2} 2^{m-1} > 2^{2^{m-1}}
    \end{eqnarray*}
    Hence, by using Theorem~\ref{Qsize}, we obtain
    \[
    \dim Q(j)\leq
    \dim\prod_{i=0}^n V(2^{p_i})\leq \prod_{i=0}^{m-2} 2^{2^i} < 2^{2^{m-1}} <
    \sqrt{j} \lfloor \log j \rfloor,
    \]
    as required.

    A similar argument, using Theorem~\ref{Wsize}, gives
    $\dim W(j)\leq \sqrt{j} \lfloor \log j \rfloor$.

    {\bf Case 2.} Suppose that $j = 2^{2^m-\lfloor \log m\rfloor-1}$. Then, by
    definition, $U(j)\subseteq R(j)\cap S(j)$, and so $\dim Q(j), \dim W(j)
    \leq \dim V(2^{2^m-\lfloor \log m\rfloor-1}) = 2^{2^{m-1}}$, by
    Theorem~\ref{8props}(2). Consequently, $\dim Q(j), \dim W(j)\leq \sqrt{j}
    \lfloor \log j\rfloor.$

    {\bf Case 3.} Suppose that $j>2^{2^m-\lfloor \log m\rfloor-1}$. Then $p_n
    = 2^m-\lfloor \log m\rfloor-1$ and $2^{p_n+1}-j< 2^{2^m-\lfloor \log
    m\rfloor-1}$.  Set $k:= 2^{p_n+1}-j$, and note that $k<j$ and that
    Case 1 applies to $k$. Thus, an application of Case 1 gives
    $\dim
    Q(k), \dim W(k) \leq \sqrt{k} \lfloor \log k \rfloor
    <\sqrt{j} \lfloor \log j \rfloor$.

    Now, apply Theorem~\ref{WQsmall} to see that
    \begin{eqnarray*}
    \lefteqn{\dim Q(j) \leq \dim W(2^{p_n+1}-j)
    \dim V(2^{p_n+1}) =}\\&&\dim W(k) \cdot 2 \leq 2\sqrt{k} \lfloor \log k
    \rfloor < 2\sqrt{j} \lfloor \log j \rfloor,
    \end{eqnarray*}
    as required.

    A similar argument shows that
    $\dim W(j)\leq 2\sqrt{j} \lfloor \log j \rfloor$ in this case.

    This finishes the three cases and thus also the proof.
\end{proof}

Now, for each $m \in Z$, define $T_m \subset \mathbb{N}$ to be the set bounded
above by $S_m$ and below by $S_{m'}$, where $m'$ is the next lowest value in
$Z$ (or by 0, if $m$ is the lowest value of $Z$). More formally, if $m', m \in
Z$ with $m'<m$ and $(m', m)\cap Z = \emptyset$, then set $T_m = [2^{m'} -
\lfloor \log m' \rfloor, 2^m - m - \lfloor \log m \rfloor - 1]$. If $m$ is the
minimal value of $Z$, then set $T_m = [1, 2^m - m - \lfloor \log m \rfloor -
1]$. The subsets $\{S_m, T_m\}_{m\in
Z}$ provide a partition of $\mathbb{N}$.

\begin{theorem}\label{Tdim}
    Let $j$ be a natural number. Write $j$ in binary form as
    $$j=2^{p_0}+2^{p_1}+...+2^{p_n}$$ with $0\leq p_0 < p_1 < ... <p_n$. If
    there exists an $m\in Z$ such that $p_0, ..., p_n \in T_m$, then
    $\dim Q(j), \dim W(j)\leq 2$.
\end{theorem}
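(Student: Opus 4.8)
The plan is to exploit that, since $\{S_{m'},T_{m'}\}_{m'\in Z}$ partitions $\mathbb{N}$, the interval $T_m$ is disjoint from $S$; hence every exponent $p_0,\dots,p_n$ occurring in $j$ satisfies $p_i\notin S$. By Theorem~\ref{8props}(1) this gives $\dim_K V(2^{p_i})=2$, and by the construction of $N$ and $M$ preceding Theorem~\ref{Qsize} it gives $N(2^{p_i})=Km_{1,p_i}$ one-dimensional and $M(2^{p_i})H(2^{p_i})=(U(2^{p_i})+Km_{2,p_i})H(2^{p_i})\subseteq U(2^{p_i+1})$, using Theorem~\ref{8props}(6),(8). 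I would treat ``$j$ a power of $2$'' and ``$j$ not a power of $2$'' separately, bounding $Q(j)$ first and then $W(j)$.

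For $Q(j)$: when $j$ is not a power of $2$, Theorem~\ref{Qsize} gives $Q(j)\subseteq\prod_{i=0}^n N(2^{p_{n-i}})$, a product of one-dimensional spaces, so $\dim Q(j)\le 1$. When $j=2^{p_0}$ is a power of $2$, from $M(2^{p_0})H(2^{p_0})\subseteq U(2^{p_0+1})$ we get $M(2^{p_0})\subseteq R(2^{p_0})$, whence $\dim Q(2^{p_0})\le\dim H(2^{p_0})-\dim M(2^{p_0})=\dim V(2^{p_0})-1=1$. So in fact $\dim Q(t)\le 1$ for every positive integer $t$ all of whose binary digits lie in $T_m$; I will reuse this below.

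For $W(j)$: when $j=2^{p_0}$ is a power of $2$, Theorem~\ref{Wsize} with $n=0$ gives $W(j)\subseteq V(2^{p_0})$, so $\dim W(j)\le 2$. When $j$ is not a power of $2$ we have $2^{p_n}<j<2^{p_n+1}$, so Theorem~\ref{WQsmall} yields $\dim W(j)\le\dim Q(2^{p_n+1}-j)\cdot\dim V(2^{p_n+1})$. Here $2^{p_n+1}-j=2^{p_n}-(2^{p_0}+\dots+2^{p_{n-1}})$ is a positive integer whose binary digits all lie in $[p_0,p_n-1]\subseteq T_m$, so the $Q$-bound above gives $\dim Q(2^{p_n+1}-j)\le 1$. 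And $\dim V(2^{p_n+1})=2$: since $p_n\le 2^m-m-\lfloor\log m\rfloor-1$, the exponent $p_n+1$ is either still in $T_m$, hence not in $S$, giving $\dim V(2^{p_n+1})=2$ by Theorem~\ref{8props}(1); or $p_n+1=2^m-m-\lfloor\log m\rfloor$, the bottom of $S_m$, giving $\dim V(2^{p_n+1})=2^{2^0}=2$ by Theorem~\ref{8props}(2) with $i=m$. Hence $\dim W(j)\le 1\cdot 2=2$.

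The main obstacle is the bound on $W(j)$. The sets $N(2^i)\subseteq V(2^i)$ record the ``kill on the right'' phenomenon of Theorem~\ref{8props}(8) and let $Q(j)$, built from the right annihilators $R(j)$, collapse to a product of one-dimensional spaces; but there is no left-sided analogue, so Theorem~\ref{Wsize} only places $W(j)$ inside $\prod_{i=0}^n V(2^{p_i})$, of dimension $2^{n+1}$, which is far too large. The resolution is to transfer to $Q$ via Theorem~\ref{WQsmall}, and the two points needing care are: verifying that the new argument $2^{p_n+1}-j$ again has all its binary digits inside $T_m$, so the $Q$-bound applies to it (and it is covered even when it happens to be a power of $2$, since then it must equal $2^{p_0}$); and verifying that $\dim V(2^{p_n+1})=2$ even in the borderline case where $p_n+1$ is exactly the first exponent of the block $S_m$, where the dimensions of $V$ begin to grow.
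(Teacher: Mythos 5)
Your proof is correct and takes essentially the same route as the paper: bound $\dim Q(j)\leq 1$ using Theorem~\ref{Qsize} and the fact that each $N(2^{p_i})$ is one-dimensional when $p_i\notin S$, then transfer to $W(j)$ via Theorem~\ref{WQsmall} applied to $2^{p_n+1}-j$. You are in fact somewhat more careful than the paper, which tacitly assumes $j$ is not a power of $2$ when invoking Theorem~\ref{Qsize} and Theorem~\ref{WQsmall}, and does not spell out why $\dim V(2^{p_n+1})=2$ in the borderline case $p_n+1=2^m-m-\lfloor\log m\rfloor$; your explicit treatment of these two edge cases closes small gaps in the published argument.
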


\begin{proof}
    Note that
    $\dim Q(j)\leq \dim\left(\prod_{i=0}^n N(2^{p_i})\right) = 1$,
    by Theorem~\ref{Qsize}, because
    $p_0, ..., p_n \notin S$.

    For the $W(j)$ case, note that $2^{p_n} \leq j < 2^{p_n+1}$, and let
    $2^{q_0} + ... + 2^{q_n}$ be the binary form of $2^{p_n+1}-j$. As $p_0
    = q_0 < ... < q_n < p_n$, it follows that $q_0, ..., q_n \in T_m$
    and so
    $q_0, ..., q_n \notin S$. Applying Theorem~\ref{Qsize} in this case
    gives $\dim
    Q(2^{p_n+1}-j) \leq 1$, and then applying Theorem~\ref{WQsmall}
    gives $$W(j) \leq
    Q(2^{p_n+1}-j)V(2^{p_n+1}) \leq 2,$$
    as required.
\end{proof}

We can now establish the main estimate of this section.

\begin{theorem}
    For each $n > 1$,
    \[
    \dim Q(n), \dim W(n)\leq 8 \sqrt{n} (\log
    n)^3.
    \]
\end{theorem}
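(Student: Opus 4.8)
\emph{The plan.} Fix $n>1$ and write $n$ in binary. The partition $\{S_m,T_m\}_{m\in Z}$ of $\mathbb N$ sorts the bit positions of $n$ (bit $0$, if present, sits below all of these and will contribute only a harmless factor $\dim V(1)=2$): each bit of $n$ lies in exactly one $S_m$ or one $T_m$. For $m\in Z$ meeting the binary support of $n$, collect the bits of $n$ lying in $S_m$ into a number $s_m$ and those lying in $T_m$ into a number $t_m$. Since these pieces occupy disjoint, consecutively ordered blocks of bit positions, iterated use of Theorem~\ref{Qadd} gives
$$\dim Q(n)\leq \prod_m \dim Q(s_m)\,\dim Q(t_m),\qquad \dim W(n)\leq \prod_m \dim W(s_m)\,\dim W(t_m).$$
For each $m$, Theorem~\ref{Tdim} gives $\dim Q(t_m),\dim W(t_m)\leq 2$ (and when $t_m$ is $0$ or a single power of two the bound is immediate from the definitions), while Theorem~\ref{Sdim} gives $\dim Q(s_m),\dim W(s_m)\leq 2\sqrt{s_m}\lfloor\log s_m\rfloor$. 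This is exactly where Theorem~\ref{Sdim} does the real work: when $s_m$ reaches near the top of $S_m$, that theorem handles it through its Case~3, i.e.\ by reflecting through $2^{p+1}$ using Theorem~\ref{WQsmall}. Writing $K$ for the number of $m\in Z$ meeting the support of $n$, we obtain
$$\dim Q(n)\ \leq\ 2^{2K}\Big(\textstyle\prod_{s_m\neq 0}\sqrt{s_m}\Big)\Big(\textstyle\prod_{s_m\neq 0}\lfloor\log s_m\rfloor\Big),$$
and likewise for $W$.

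\emph{Controlling the product.} Let $\ell$ be the largest element of $Z$ with $s_\ell\neq 0$ (if none, then $\dim Q(n)\leq 2^{2K}$ and we finish as below). Since the top bit of $n$ then lies at position $\geq 2^{\ell}-\ell-\lfloor\log\ell\rfloor\geq 2^{\ell-1}$, we have $\log n\geq 2^{\ell-1}$, and also $s_\ell\leq n$. Every other nonzero $s_m$ sits at a strictly smaller $Z$-level, so $s_m<2^{2^m}$; because consecutive elements of $Z$ are doubly-exponentially far apart (Lemma~\ref{enumerate}), the product $\prod_{m<\ell,\,s_m\neq 0}s_m$ is dominated by its largest factor, giving $\prod_{m<\ell}s_m<2^{2^{\ell'+1}}$ and $\prod_{m<\ell}\lfloor\log s_m\rfloor<2^{2\ell'}$, where $\ell'$ is the predecessor of $\ell$ in $Z$. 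Hence
$$\dim Q(n)\ \leq\ 2^{2K}\cdot\sqrt n\cdot 2^{2^{\ell'}}\cdot \lfloor\log n\rfloor\cdot 2^{2\ell'}.$$

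\emph{The key inequality and base cases.} It remains to swallow the factors $2^{2K}$ and $2^{2^{\ell'}+2\ell'}$ into $(\log n)^2$. For the first, every $Z$-level meeting the support of $n$ is at most the $Z$-level of the top bit of $n$, which forces that level (or its predecessor) to be $\leq 1+\log\log n$; combined with the sparseness of $Z$ this makes $K$ no larger than an iterated logarithm of $n$, so $2^{2K}\leq\log n$. For the second, $2^{2^{\ell'}+2\ell'}<2^{2^{\ell'+1}}$, and since $\ell>2^{2^{2^{2\ell'}}}$ by Lemma~\ref{enumerate}, the quantity $\log n\geq 2^{\ell-1}$ dwarfs $2^{2^{\ell'+1}}$, whence $2^{2^{\ell'}+2\ell'}\leq\log n$. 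Putting these together yields $\dim Q(n)\leq \sqrt n(\log n)^3\leq 8\sqrt n(\log n)^3$, and the same argument, with the $W$-versions of Theorems~\ref{Qadd},~\ref{Sdim},~\ref{Tdim}, handles $W$. The remaining edge cases — $n$ a power of two, $n$ odd, or $n$ lying in the lowest one or two intervals — are read off directly from the definitions and from Theorems~\ref{Sdim} and~\ref{Tdim}, using the ample slack between the constant $1$ we actually obtain and the claimed $8$. The one genuine obstacle is the bookkeeping just sketched: one must confirm that, although several $S$-pieces and several $T$-pieces can occur, the $T$-pieces cost only a factor $2$ each, and among the $S$-pieces only the topmost is comparable to $\sqrt n$ while all lower pieces, together with all accumulated polylogarithmic and constant factors, are absorbed into a bounded power of $\log n$ — and this is precisely what the extreme growth condition on $Z$ in Lemma~\ref{enumerate} is designed to guarantee.
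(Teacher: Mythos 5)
Your proof follows the same skeleton as the paper's: split the binary support of $n$ across the partition $\{S_m, T_m\}_{m\in Z}$, multiply through via Theorem~\ref{Qadd}, apply Theorem~\ref{Tdim} to the $T$-pieces and Theorem~\ref{Sdim} to the $S$-pieces, and then control the accumulated product using the extreme sparsity of $Z$. The one place where your bookkeeping diverges from the paper's is in how the constant $2$ appearing in Theorem~\ref{Sdim} is absorbed. The paper discards that constant for all non-top $S$-pieces by passing to the cruder estimate $\dim Q(j_m)\leq 2\sqrt{j_m}\lfloor\log j_m\rfloor\leq j_m$, so that only a single factor of $2$ survives (from the top piece), and then bounds the $T$-contribution by $2^{\log\log n+2}=4\log n$; this yields exactly the constant $8$. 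You instead carry $2^{2K}$ (a factor of $2$ per $S$-piece and per $T$-piece) and must prove $2^{2K}\leq\log n$. This is genuinely stronger than what the easy bound $K\leq\log\log n+2$ gives (that only yields $2^{2K}\leq 16(\log n)^2$, too weak), and it requires the full force of the triple-exponential gaps in Lemma~\ref{enumerate}. The claim is in fact true — for $K\geq 2$ the top bit of $n$ already lies at position $\geq 2^{z_{K-1}}-\lfloor\log z_{K-1}\rfloor$, and the tower growth of $z_i$ makes $2^{2K}$ negligible against $\log n$ — but ``$K$ is no larger than an iterated logarithm'' as stated is a gesture rather than an argument, and this step should be made explicit. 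Likewise, ``the product is dominated by its largest factor'' for $\prod_{m<\ell}s_m<2^{2^{\ell'+1}}$ silently uses that the $Z$-gaps force $\sum_{m<\ell,\,s_m\neq0}2^m<2^{\ell'+1}$; that observation should be written out. With these two steps filled in, the argument is sound and delivers the same conclusion by essentially the same route, so the differences are ones of accounting rather than of method.
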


\begin{proof}
    Let $n = 2^{p_0} + 2^{p_1} + ...$ be the binary decomposition of
    $n$. For each $m \in Z$, let $j_m$ be the sum of each $2^{p_i}$
    that occurs in the binary form of $n$ with $p_i \in S_m$, and let
    $k_m$ be the sum of each $2^{p_i}$ with $p_i \in T_m$. Then 
    
$$n = \sum_{\substack{m\in Z\\j_m\neq 0}}
        j_m 
        + \sum_{\substack{m\in Z\\k_m\neq 0}}k_m ,$$
        as $\{S_m, T_m\}_{m\in Z}$
    forms a partition of $\mathbb{N}$.

Therefore, 
$$\dim Q(n)\leq \prod_{\substack{m\in Z\\j_m\neq 0}}\dim Q(j_{m})
        \prod_{\substack{m\in Z\\k_m\neq 0}}\dim Q(k_{m}),$$ 
        by Theorem~\ref{Qadd}. 
        
We estimate the two terms on the right hand side separately. 

    First, suppose that $m<r$ are consecutive members of $Z$ with $k_r\neq 0$.
    Then $2^{2^m -\lfloor \log m\rfloor}\leq k_r\leq n$, as $T_r=[2^{m} -
    \lfloor \log m \rfloor, 2^r - r - \lfloor \log r \rfloor - 1]$. It follows
    that $m\leq \log\log(n) +1$ in this case. Therefore, the number of 
    $m\in Z$ with   $k_m\neq 0$ is 
    $\leq \log\log(n) +2$. Note that $\dim Q(k_{m})\leq 2$, for each
    such $k_{m}$, by Theorem~\ref{Tdim}, so that 
$$\prod_{\substack{m\in Z\\k_m\neq 0}}\dim Q(k_{m})\leq 
\prod_{i=1}^{\lfloor\log\log(n)+2\rfloor}2
        \leq 2^{\log\log(n)+2} \leq 4\log n .$$

Secondly, observe that if $j_m\neq 0$ 
then $2^{2^m-m-\lfloor \log m\rfloor}\leq
j_m \leq n$ and $ j_m <2^{2^m}$, because $S_m=[2^m-m-\lfloor \log m\rfloor,
2^m-\lfloor \log m\rfloor-1]$. Also, observe that $\dim Q(j_m)\leq
2\sqrt{j_m}\lfloor\log j_m\rfloor \leq j_m$, by Theorem~\ref{Sdim}.

Suppose that $t<r$ are consecutive members of $Z$ such that 
$r$ is the largest member of $Z$ such that $j_r\neq 0$.
Note
that $2^{2^{2^{2t}}}< r$, by Lemma~\ref{enumerate}; so that $2^{2^{2t}}<\log
r\leq \log n$. 

Consider any $m\in Z$ with $m\leq t$ and $j_m\neq 0$. 
Any $p_i$ involved in the sum 
$j_m$ satisfies 
\[
p_i\leq 2^m-\lfloor\log m\rfloor -1\leq 2^m\leq 2^t.
\]
As each $p_i$ can be involved in at most one such sum $j_m$, the number of 
$m\in Z$ with $j_m\neq 0$ is $\leq 2^t$. For any such $m$, 
\[
\dim Q(j_m) \leq j_m\leq 2^{2^m}\leq 2^{2^t}
\]
Thus, 
\[
\prod_{\substack{m\in Z\\k_m\neq 0\\m<r}}\dim Q(k_{m})
\leq (2^{2^t})^{2^t} = 2^{2^{2t}} \leq \log r\leq \log n
\]
Hence, 
\[
        \prod_{\substack{m\in Z\\k_m\neq 0}}\dim Q(k_{m})
        \leq (\log n)\cdot (2\sqrt{j_r}\log j_r)
        \leq 2\sqrt{n}(\log n)^2
        \]
        and so
\begin{eqnarray*}
 \dim Q(n)&\leq& \prod_{\substack{m\in Z\\j_m\neq 0}}\dim Q(j_{m})
        \prod_{\substack{m\in Z\\k_m\neq 0}}\dim Q(k_{m})\\[1ex]
        &\leq& (4\log n) \cdot     (2\sqrt{n}(\log n)^2) 
        = 8  \sqrt{n}(\log n)^3,
        \end{eqnarray*} 
as required.

    To show that
    $W(n)\leq  8\sqrt {n}(\log n )^{3}$ we use
    an analogous argument.
\end{proof}

Now we are ready to obtain the main result of the paper.
\begin{theorem} The algebra $\bar{E}/A$ is a finitely generated infinite
dimensional nil algebra with Gelfand-Kirillov dimension at most $3$.
\end{theorem}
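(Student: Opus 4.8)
The plan is to assemble ingredients already in place. First I would observe that $\bar A/E$ is generated by the images of the two generators $x,y$ of $\bar A$, so it is finitely generated (indeed two-generated), and by Theorem~\ref{AbaroverEnil} it is nil and infinite dimensional. Hence the only substantive thing to prove is the bound $\GKdim(\bar A/E)\le 3$.

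For that, I would use that $E$ is a homogeneous ideal and $\bar A/E$ is generated in degree $1$, so that its growth function is $\sum_{k=0}^{n}\dim_K\big(H(k)/E(k)\big)$ and
$$\GKdim(\bar A/E)=\limsup_{n\to\infty}\frac{\log\Big(\sum_{k=0}^{n}\dim_K\big(H(k)/E(k)\big)\Big)}{\log n}$$
(cf.\ \cite{kl}). Theorem~\ref{totalsize} gives $\dim_K\big(H(n)/E(n)\big)\le\sum_{k=0}^{n}\dim_K W(n-k)\,\dim_K Q(k)$, and the preceding theorem gives $\dim_K W(m),\dim_K Q(m)\le 8\sqrt m\,(\log m)^3$ for $m>1$, together with $\dim_K W(m),\dim_K Q(m)\le 2$ for $m\in\{0,1\}$. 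I would package these uniformly as $\dim_K W(m),\dim_K Q(m)\le C\sqrt{m+1}\,(\log(m+2))^3$ for a fixed constant $C$ and all $m\ge 0$. Feeding this into Theorem~\ref{totalsize} and applying AM--GM in the form $\sqrt{(n-k+1)(k+1)}\le n+1$, each summand is at most $C^{2}(n+1)\,(\log(n+2))^{6}$, whence $\dim_K\big(H(n)/E(n)\big)\le C^{2}(n+1)^{2}\,(\log(n+2))^{6}$, and summing over $k\le n$ yields $\sum_{k=0}^{n}\dim_K\big(H(k)/E(k)\big)\le C^{2}(n+1)^{3}\,(\log(n+2))^{6}$. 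Therefore
$$\GKdim(\bar A/E)\le\limsup_{n\to\infty}\frac{3\log(n+1)+6\log\log(n+2)+O(1)}{\log n}=3,$$
which is the required bound.

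I do not expect a genuine obstacle here: all the difficulty lies in the construction of $U,V$ and in the growth estimates of Sections~6--7, which I am allowed to assume. The only points requiring a little care are the book-keeping of the boundary terms $k\in\{0,1\}$ and $n-k\in\{0,1\}$ (where a naive $\sqrt m\,(\log m)^3$ bound degenerates, which is why one shifts to $\sqrt{m+1}\,(\log(m+2))^3$), and the routine check that for a connected graded algebra generated in degree one the $\limsup$ formula above genuinely computes the Gelfand--Kirillov dimension. Neither is hard. Finally, I would note the harmless typo in the statement: the algebra in question is $\bar A/E$.
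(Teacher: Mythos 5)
Your argument is essentially identical to the paper's: cite Theorem~\ref{AbaroverEnil} for the finitely generated/nil/infinite-dimensional part, then combine Theorem~\ref{totalsize} with the bound $\dim Q(m),\dim W(m)\le 8\sqrt m(\log m)^3$ to get $\dim H(n)/E(n)=O(n^2(\log n)^6)$, sum to get $O(n^3(\log n)^6)$, and conclude $\GKdim\le 3$. Your handling of the $m\in\{0,1\}$ boundary terms is slightly more careful than the paper's (which writes $\sqrt{(n-k)k}(\log(n-k)\log k)^3$ without comment on the degenerate endpoints), and you correctly flag the typo $\bar E/A$ for $\bar A/E$, but these are cosmetic refinements rather than a different route.
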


\begin{proof}
The algebra $\bar{E}/A$ is a finitely generated infinite
dimensional nil algebra, by Theorem~\ref{AbaroverEnil}.

    By combining the previous theorem with Theorem~\ref{totalsize},
    we obtain
    \begin{eqnarray*}
    \dim H(n)/E(n)&\leq& \sum_{k=0}^n \dim(W(n-k))\dim(Q(k))\\
    &\leq&
    \sum_{k=0}^n 64\sqrt{(n-k)k\;} (\log (n-k) \log k)^3
        < 64 n^2 (\log n)^6
    \end{eqnarray*}
Hence,
    \[
    \sum_{i=1}^n \dim H(i)/E(i) \leq 64 n^3 (\log n)^6.
    \]
Therefore,
\begin{eqnarray*}
\GKdim(\bar{A}/E)&=&
\overline{\lim_{n\rightarrow\infty}} \left(
        \frac{\log (\sum_{i=1}^n\dim H(i)/E(i))}{\log n}\right)\\
    &\leq&
    \overline{\lim_{n\rightarrow\infty}} \left(
        \frac{6+3(\log n) + 6(\log \log n)}{\log n}\right) = 3,
   \end{eqnarray*}
   as required.
\end{proof}



\section*{Concluding remarks and some questions}

We have constructed a finitely generated infinite dimensional nil algebra with
Gelfand-Kirillov dimension at most three. Equivalently, we have a finitely
generated infinite dimensional nil but not nilpotent algebra with
Gelfand-Kirillov dimension at most three.

In contrast, nil does imply nilpotent for algebras of Gelfand-Kirillov
dimension at most one, by \cite{ssw}. Combining this with Bergman's Gap
Theorem, \cite[Theorem 2.5]{kl}, we see that a nil but not nilpotent example
must have Gelfand-Kirillov dimension at least two. It would be very
interesting to find the precise dividing line in terms of growth. A starting
point might be to consider nil algebras with quadratically bounded growth and
attempt to show that these algebras must be finite dimensional. Given a
positive result in this direction, one might then speculate whether there
exists a finitely generated nil but not nilpotent algebra with
Gelfand-Kirillov dimension two (but, of course, not having quadratic growth).

Many of the constructions of weird algebras that we know involve starting with
a free algebra and introducing infinitely many relations; so the corresponding questions for finitely presented algebras remain unresolved.
In particular, we ask: is every finitely presented nil algebra nilpotent?

It seems unlikely that by using the methods employed in this work we
can hope to construct a nil but not nilpotent algebra with Gelfand-Kirillov
dimension two. Our algebras are graded, and this raises the question of
whether a finitely generated nil algebra that is graded and has
Gelfand-Kirillov dimension at most two (or quadratic growth) must in fact be
finite dimensional.

The methods employed here depend crucially on the countability hypothesis. It
would be interesting to see if it is possible to construct a finitely
generated infinite dimensional nil algebra with finite Gelfand-Kirillov
dimension over an uncountable field.

There are many problems of a similar type in Zelmanov's paper \cite{zelmanov}.


\vspace{3ex}

\begin{minipage}{1.00\linewidth}
\noindent
T H Lenagan, Agata Smoktunowicz: \\
Maxwell Institute for Mathematical Sciences\\
School of Mathematics, University of Edinburgh,\\
James Clerk Maxwell Building, King's Buildings, Mayfield Road,\\
Edinburgh EH9 3JZ, Scotland, UK\\
E-mail: {\tt tom@maths.ed.ac.uk}, {\tt A.Smoktunowicz@ed.ac.uk}
\\

\noindent Alexander Young:\\
Department of Mathematics\\
University of California at San Diego\\
9500 Gilman Drive\\
La Jolla, CA 92093-0112\\
USA\\
E-mail: {\tt aayoung@math.ucsd.edu}
\end{minipage}

\end{document}